\newcommand{\R}{\mathds{R}}
\newcommand{\E}{\mathds{E}}
\newcommand{\N}{\mathds{N}}
\newcommand{\epsi}{\ensuremath{\varepsilon}}
\newcommand{\skalar}[2]{\left\langle #1,#2 \right\rangle}
\newcommand{\num}[2]{\ensuremath{#1,\ldots,#2}}
\newcommand{\vect}[3]{\ensuremath{#1_{#2},\ldots,#1_{#3}}}
\renewcommand{\rho}{\varrho}
\newcommand{\eqd}{\stackrel{d}{=}}
\newcommand{\eqn}{\begin{equation}}
\newcommand{\nqe}{\end{equation}}
\newcommand{\pmx}{\begin{pmatrix}}
\newcommand{\xmp}{\end{pmatrix}}
\newcommand{\bmx}{\begin{bmatrix}}
\newcommand{\xmb}{\end{bmatrix}}
\DeclareMathOperator{\Var}{Var}
\theoremstyle{definition}
\newtheorem{definition}{Definition}[section]
\theoremstyle{plain}
\newtheorem{satz}[definition]{Theorem}
\newtheorem{lemma}[definition]{Lemma}
\newtheorem{korollar}[definition]{Corollary}
\theoremstyle{remark}
\newtheorem{bem}[definition]{Remark}
\newcommand{\BOX}{\ensuremath\Box}
\renewenvironment{proof}[1]{{\vskip\baselineskip\noindent\textit{Proof#1.}}}%
{\origqed\vskip\baselineskip\gdef\origqed{\hspace*{.1pt}\hspace*{\fill}\BOX}}
\def\@tagformdelimstart{(}%
\def\@tagformdelimend{)}%
\def\@tagformdel{%
  \gdef\@tagformdelimstart{}%
  \gdef\@tagformdelimend{}%
}
\def\@tagformset{%
  \gdef\@tagformdelimstart{(}%
  \gdef\@tagformdelimend{)}%
}
\def\tagform@#1{%
   \maketag@@@{\@tagformdelimstart\ignorespaces#1\unskip%
   \@@italiccorr\@tagformdelimend}\@tagformset}
\def\origqed{\hspace*{.1pt}\hspace*{\fill}\BOX}
\def\qed{\ifmmode%
 \@tagformdel%
 \tag{\BOX}%
 \else%
 \hspace*{.1pt}\hspace*{\fill}\BOX%
 \fi%
 \gdef\origqed{}}
\title{On tail bounds for random recursive trees}
\author{G\"otz Olaf Munsonius\\[1.2ex]
\small Institute of Mathematics, J.W. Goethe University\\
\small 60054 Frankfurt a.M., Germany\\[1.1ex]
\small munsonius@math.uni-frankfurt.de\normalsize
}
\date{}
\begin{document}

\maketitle

\begin{abstract}
We consider a multivariate distributional recursion of sum-type as arising in the probabilistic analysis of algorithms and random trees.
We prove an upper tail bound for the solution using Chernoff's bounding technique by estimating the Laplace transform.
The problem is traced back to the corresponding problem for binary search trees by stochastic domination.
The result obtained is applied to the internal path length and Wiener index of random $b$-ary recursive trees with weighted edges and random linear recursive trees.
Finally, lower tail bounds for the Wiener index of these trees are given.
\end{abstract}

\noindent%
Key words: random trees, probabilistic analysis of algorithms, tail bounds, path length, Wiener index\\[1ex]

\section{Introduction}
Many parameters of recursive algorithms, trees or other recursive structures can often be described by a so-called recursion of sum type
\eqn\label{recursion_tail}
X_n\eqd\sum_{i=1}^bA_i(I_n)X_{I_{n,i}}^{(i)}+d(I_n,Z)\qquad(n\ge 2)
\nqe
where $X_n^{(1)},\ldots,X_n^{(b)}$ have the same distribution as $X_n$, $d:\R^b\times\R^b\to\R^k$ and $A_i:\R^b\to\R^{k\times k}$ are deterministic functions, $I_n=(\vect{I}{n,1}{n,b})\in\{\num{0}{n-1}\}^b$ and $Z\in\R_{\ge 0}^b$ are random vectors with $E[d(I_n,Z)]=0$, and $X_n^{(1)},\ldots,X_n^{(b)}$, $I_n$, $Z$ are independent.
By $\eqd$ we denote equality in distribution.

From the algorithmic point of view, such a recurrence arises by considering so-called divide and conquer algorithms.
Let $Y_n$ denote the parameter of interest of the algorithm applied to a problem of size $n$.
The algorithm splits the large problem into $b$ subproblems of the smaller sizes $I_{n,1},\ldots,I_{n,b}$.
If the considered parameter $Y_n$ is essential given by the (possible weighted) sum of the corresponding parameters of the smaller subproblems, for a matrix $C_n$ the vector $X_n:=C_n(Y_n-E[Y_n])$ suffices the recurrence \eqref{recursion_tail} where the coefficients $A_i(I_n)$ are the weights of the subproblems (scaled by $C_n$ and $C_{I_{n,i}}$) and the additional function $d$ gives the cost for splitting the problem in this manner and merging the solutions of the subproblems to a solution of the size $n$ problem.
The vector $Z$ attends more universality.

One famous example for a parameter satisfying recursion \eqref{recursion_tail} is the distribution of the number of comparisons made by quicksort which is equal in distribution to the internal path length of the random binary search tree.
\citet{mcdiarmid_hayward_96} used martingale difference methods to show upper tail bounds for it.
\citet{roesler_92} as well as \citet{fill_janson_01} obtained upper bounds for its Laplace transform by induction.
Having upper bounds for the Laplace transform, they concluded upper bounds for the tails of the distribution by application of Chernoff's bounding technique.
\citet{alikhan_neininger_07} generalized this procedure to the two-dimensional recursion for the Wiener index and the internal path length of the random binary search tree extending their technique in \citet{alikhan_neininger_04} for the analysis of tail bounds for the complexity of a randomized algorithm to evaluate game trees .

In this paper, we apply the method of \citet{alikhan_neininger_07} to multivariate functionals satisfying recursion \eqref{recursion_tail} where the operator norm of the coefficient matrices $A_i$ can be stochastically bounded in a special way.

We denote by $\preceq_{\mathrm{st}}$ the stochastic order and by $U$ a random variable uniformly distributed on $[0,1]$.
The fundamental result of this paper is the following theorem.
\begin{satz}\label{upper_tail_bound}
Let $X_n$ be a solution of the distributional recursion \eqref{recursion_tail}.
Assume that $X_1=0$, $\|d(I_n,Z)\|\le D$ almost surely for all $n\in\N$ and for a constant $D\in\R$ and that
\[
\sum_{i=1}^b\|A_i(I_n)\|_{\mathrm{op}}^2\preceq_{\mathrm{st}}1-U(1-U)
\]
as well as $\|A_i\|_{\mathrm{op}}\le 1$ for all $i\in\{1,\ldots,b\}$.
Let $\gamma\approx 2.0047$ be the positive solution of $12/7=e^{2/\gamma}-2/\gamma$ and $L_0\approx 5.0177$ be the largest root of $e^L=6L^2$.
Then we have for all $t>0$, $n\in\N$ and any component $X_{n,j}$ of $X_n$ ($j\in\{\num{1}{k}\}$) with 
$C:=48D/\gamma+D\sqrt{48(48/\gamma^2-5)}$,
\eqn\label{bounds}
P\left(X_{n,j}>t\right)\le\begin{cases}
\exp\left(-\frac{t^2}{10\gamma^2 D^2}\right),&\text{ if $0\le t\le 5\gamma D$},\\
\exp\left(\frac 52-\frac{t}{\gamma D}\right),&\text{ if $5\gamma D< t\le C$},\\
\exp\left(-\frac{t^2}{96 D^2}\right),&\text{ if $C< t\le 48 DL_0$},\\
\exp\left(24L_0^2-\frac{L_0}{D}t\right),&\text{ if $48 DL_0< t\le 4De^{L_0}$},\\
\exp\left(\frac tD-\frac tD\log\left(\frac{t}{4 D}\right)\right),&\text{ if $4De^{L_0}< t$}.
\end{cases}
\nqe
The same bounds hold for the left tail $P(X_{n,j}<-t)$.
\end{satz}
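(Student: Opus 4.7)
The plan is to apply Chernoff's bounding technique following \citet{roesler_92}, \citet{fill_janson_01} and \citet{alikhan_neininger_07}. I would introduce the combined Laplace transform
\[
\phi_n(\lambda) \;:=\; \max_{1 \le j \le k,\, \sigma \in \{-1,+1\}} E\!\left[\exp\!\bigl(\lambda\sigma X_{n,j}\bigr)\right],
\]
which controls both tails of every component simultaneously, and aim to prove an explicit envelope $\phi_n(\lambda) \le G(\lambda)$ valid for all $n \in \N$ by strong induction on $n$. The base case $n = 1$ is immediate since $X_1 = 0$. Once this envelope is in hand, each of the five regimes in \eqref{bounds} is obtained by minimizing $e^{-\lambda t}G(\lambda)$ over $\lambda$, and the left-tail bound follows by symmetry via $\sigma = -1$.

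The recursive step starts from \eqref{recursion_tail}: conditioning on $(I_n,Z)$, using independence of $X^{(1)},\ldots,X^{(b)}$, writing the $j$-th component of $A_i(I_n) X^{(i)}$ as an inner product with a vector of Euclidean length at most $\|A_i(I_n)\|_{\mathrm{op}} \le 1$, and applying Cauchy--Schwarz together with $\|d(I_n,Z)\| \le D$, one obtains a recursive inequality of the schematic form
\[
\phi_n(\lambda) \;\le\; e^{\lambda D} \, E\!\left[\prod_{i=1}^b \phi_{I_{n,i}}\!\bigl(\lambda\,\|A_i(I_n)\|_{\mathrm{op}}\bigr)\right].
\]
The key structural hypothesis $\sum_i \|A_i(I_n)\|_{\mathrm{op}}^2 \preceq_{\mathrm{st}} 1 - U(1-U)$ is exactly tailored to the second-order behaviour of the right-hand side: when $\log G$ is (approximately) quadratic, the logarithm of the product depends on the $A_i$'s only through $\sum_i \|A_i\|_{\mathrm{op}}^2$, and stochastic domination then reduces the inductive verification to the binary search tree template (where $U^2 + (1-U)^2 = 1 - 2U(1-U)$ appears). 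Similar monotonicity/convexity arguments, combined with $\|A_i\|_{\mathrm{op}} \le 1$, handle the non-quadratic pieces.

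For the envelope itself I would use a piecewise ansatz in which $\log G(\lambda)$ is quadratic on a first interval near $0$ (producing the sub-Gaussian piece of \eqref{bounds}), linear on a second interval (giving the exponential piece with rate $1/(\gamma D)$), quadratic again on a third interval (producing the second Gaussian bound with coefficient $96 D^2$), linear on a fourth interval, and finally of the form $c\lambda\log\lambda$ for large $\lambda$ (producing the super-exponential tail $(t/D)\log(t/(4D))$). The breakpoint values are forced by the requirement that $\log G$ remain convex and that the recursive inequality close on every piece; the defining equations $12/7 = e^{2/\gamma} - 2/\gamma$ and $e^{L_0} = 6 L_0^2$ appear precisely when fixing these breakpoints, and the $t$-axis thresholds $5\gamma D$, $C$, $48 D L_0$, $4 D e^{L_0}$ are their images under the Chernoff optimality condition $(\log G)'(\lambda) = t$.

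The main obstacle will be constructing $G$ explicitly so that the recursive inequality $G(\lambda) \ge e^{\lambda D}\,E\!\left[\prod_i G\!\bigl(\lambda\|A_i(I_n)\|_{\mathrm{op}}\bigr)\right]$ holds \emph{uniformly} on every piece, for every joint law of $(\|A_i(I_n)\|_{\mathrm{op}})_{i=1}^b$ compatible with the stochastic-domination hypothesis, while simultaneously gluing continuously at the breakpoints. In the quadratic pieces this reduces cleanly to the BST integral over $U \in [0,1]$, but in the linear and super-exponential pieces a more delicate argument is needed to bound $E[\prod_i G(\lambda\|A_i\|_{\mathrm{op}})]$ from above using only $\sum \|A_i\|_{\mathrm{op}}^2 \preceq_{\mathrm{st}} 1 - U(1-U)$ and $\|A_i\|_{\mathrm{op}} \le 1$. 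All of the numerical constants $\gamma$, $L_0$, $C$, and the five thresholds in \eqref{bounds} materialise from this matching.
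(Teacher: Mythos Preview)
Your Chernoff-plus-induction strategy matches the paper's, but three specific steps would not work as written. First, the componentwise transform $\phi_n(\lambda)=\max_{j,\sigma}E[\exp(\lambda\sigma X_{n,j})]$ does not close the induction: the $j$-th component of $A_i(I_n)X^{(i)}_{I_{n,i}}$ is $\langle A_i(I_n)^T e_j,\,X^{(i)}_{I_{n,i}}\rangle$, an inner product with a vector that is not a coordinate direction, so its exponential moment is not controlled by $\phi_{I_{n,i}}$ as you defined it. The paper instead bounds $E[\exp(\langle s,X_n\rangle)]$ for \emph{all} $s\in\R^k$ by a function of $\|s\|$ alone; then $\langle s,A_iX^{(i)}\rangle=\langle A_i^Ts,X^{(i)}\rangle$ with $\|A_i^Ts\|\le\|A_i\|_{\mathrm{op}}\|s\|$ closes the recursion cleanly. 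Second, the crude factor $e^{\lambda D}$ in your schematic inequality is fatal for the sub-Gaussian piece: with $\log G(\lambda)=K\lambda^2$ you would need $e^{\lambda D}E[e^{-K\lambda^2U(1-U)}]\le1$, but the left side is $1+\lambda D+O(\lambda^2)$ for small $\lambda$. The paper keeps $\exp(\langle s,d(I_n,Z)\rangle)$ inside the expectation together with $\exp(-K\|s\|^2U(1-U))$, applies Cauchy--Schwarz \emph{there}, and then uses the centering $E[d(I_n,Z)]=0$ to bound $E[\exp(2\langle s,d\rangle)]\le 1+\|s\|^2D^2\gamma^2\bigl(e^{2/\gamma}-1-2/\gamma\bigr)$ via Taylor expansion; this is precisely where the equation defining $\gamma$ enters.

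Finally, the envelope in the paper has only three pieces---$\exp(\tfrac52\gamma^2D^2\|s\|^2)$ for $\|s\|\le 1/(\gamma D)$, then $\exp(24D^2\|s\|^2)$ up to $\|s\|=L_0/D$, then $\exp(4e^{D\|s\|})$---not five. The five regimes in \eqref{bounds} arise only in the subsequent Chernoff optimisation of $u\mapsto K_uu^2-ut$: three interior minimisers plus two boundary minimisers at $u=1/(\gamma D)$ and $u=L_0/D$. Linear pieces in $\log G$ are not needed and would not close the induction anyway, since the stochastic-domination hypothesis controls only $\sum_i\|A_i\|_{\mathrm{op}}^2$.
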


As an application of Theorem \ref{upper_tail_bound}, we obtain upper tail bounds for the distribution of the internal path length and the Wiener index, in random $b$-ary recursive trees with weighted edges by showing the stochastic domination condition.
The distance between two nodes in a tree is defined as the number of edges on the unique path between the two nodes.
Then, the internal path length of a rooted tree is the sum of all node depths of the tree where the depth of a node is its distance to the root, and the Wiener index is the sum of the distances between all unordered pairs of nodes.

The $b$-ary recursive tree with weighted edges can be considered as a special case of the tree model in the paper of \citet{broutin_devroye_06} in discrete time where the lifetimes of the edges are independent exponentially distributed random variables.
The shape of the random tree is also obtained as an increasing tree due to \citet{bergeron_flajolet_salvy_07} and is a special case of the general model of random trees in \citet{broutin_devroye_mcleish_delasalle_08}.

\begin{satz}\label{upper_tail_bound_wp}
Let $Y_n:=(W_{n},P_n)^T$ denote the vector consisting of the Wiener index and the internal path length of a random $b$-ary recursive tree of size $n$ with edge weights $Z$ where $\|Z\|$ is bounded almost surely.
Then there exists a constant $D$ such that we have in the recursive formula \eqref{recursion_tail} for $X_n$ given by
\[
X_{n}:=\begin{bmatrix} \frac 1{n^2}&0\\0&\frac 1{n}\end{bmatrix} \left(Y_{n}-E[Y_{n}]\right)
\]
almost surely $\|d(I_n,Z)\|\le D$ and the bounds \eqref{bounds} of Theorem \ref{upper_tail_bound} are valid for
\begin{align*}
P\left(\frac{W_{n}-E[W_n]}{n^2}>t\right)&&\text{and}&&P\left(\frac{P_{n}-E[P_{n}]}{n}>t\right)
\end{align*}
as well as for the corresponding left tails $P(X_{n,j}<-t)$ (for $j=1,2$). 
\end{satz}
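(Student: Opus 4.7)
The plan is to verify the hypotheses of Theorem \ref{upper_tail_bound} for the renormalization $X_n := \diag(n^{-2}, n^{-1})(Y_n - E[Y_n])$ and then invoke that theorem directly. I begin by decomposing the tree at its root into the $b$ immediate subtrees of sizes $I_{n,1},\ldots,I_{n,b}$ (with $\sum_i I_{n,i} = n-1$) and edges of weights $Z_1,\ldots,Z_b$. A standard counting of path contributions gives
\begin{align*}
P_n &\eqd \sum_{i=1}^b\bigl(P_{I_{n,i}}^{(i)} + I_{n,i}Z_i\bigr),\\
W_n &\eqd \sum_{i=1}^b\bigl(W_{I_{n,i}}^{(i)} + (n-I_{n,i})P_{I_{n,i}}^{(i)} + I_{n,i}(n-I_{n,i})Z_i\bigr),
\end{align*}
with independent copies of $(W_m,P_m)$. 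Centering and rescaling put the system in the form \eqref{recursion_tail} with upper triangular coefficient matrices
\[
A_i(I_n) = \begin{bmatrix}(I_{n,i}/n)^2 & I_{n,i}(n-I_{n,i})/n^2 \\ 0 & I_{n,i}/n\end{bmatrix}
\]
and a toll $d(I_n,Z)$ gathering the rescaled edge-weight terms and the mean-centering corrections $n^{-2}(E[W_n] - \sum_i E[W_{I_{n,i}}|I_n])$ and $n^{-1}(E[P_n] - \sum_i E[P_{I_{n,i}}|I_n])$.

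The bound $\|d(I_n,Z)\| \le D$ a.s.\ is obtained in two parts: the weight contributions are controlled since $\|Z\|$ is a.s.\ bounded and $I_{n,i}(n-I_{n,i})/n^2 \le 1/4$; the mean-centering corrections reduce, after division by $n^2$ and $n$ respectively, to uniformly bounded functions of $I_n$ thanks to the logarithmic asymptotics $E[P_n]\sim c_1 n\log n$ and $E[W_n]\sim c_2 n^2\log n$ known for $b$-ary recursive trees (cf.\ \citet{broutin_devroye_06,broutin_devroye_mcleish_delasalle_08}). A direct eigenvalue computation for $A_i(I_n)^T A_i(I_n)$ then yields the closed form
\[
\|A_i(I_n)\|_{\mathrm{op}}^2 = x_i^2\Bigl[x_i^2 - x_i + 1 + (1-x_i)\sqrt{1+x_i^2}\Bigr]\qquad (x_i := I_{n,i}/n),
\]
which is monotone on $[0,1]$ and attains $1$ only at $x_i = 1$, so $\|A_i\|_{\mathrm{op}} \le 1$ is automatic.

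The main obstacle is the stochastic domination $\sum_{i=1}^b \|A_i(I_n)\|_{\mathrm{op}}^2 \preceq_{\mathrm{st}} 1-U(1-U)$. My plan is to exploit the increasing-tree representation of the $b$-ary recursive tree (\citet{bergeron_flajolet_salvy_07}, equivalently the continuous-time embedding of \citet{broutin_devroye_06}) to couple the vector $(I_{n,1}/n,\ldots,I_{n,b}/n)$ with a sequence of successive uniform splits, reducing the multivariate inequality to the binary case $b=2$, in which $I_{n,1}$ is uniform on $\{0,\ldots,n-1\}$ and converges to $U\sim\mathrm{Unif}[0,1]$. In that case the inequality becomes the pointwise scalar statement
\[
U^2 g(U) + (1-U)^2 g(1-U) \le 1-U(1-U)\qquad\text{for } U\in[0,1],
\]
with $g(x) := x^2 - x + 1 + (1-x)\sqrt{1+x^2}$; this is settled by an elementary analytic check, with the finite-$n$ discrepancy absorbed through the ordering $\preceq_{\mathrm{st}}$. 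This reduction and the pointwise inequality are what I expect to be the delicate part of the argument. Once the three hypotheses are in place, Theorem \ref{upper_tail_bound} immediately yields \eqref{bounds} for both coordinates of $X_n$ and, by the theorem's symmetry, for the corresponding left tails.
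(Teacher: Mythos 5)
Your overall structure is right: you correctly identify the recursion \eqref{recursion_tail}, the coefficient matrices $A_i(I_n)$, and the closed-form eigenvalue formula $\|A_i(I_n)\|_{\mathrm{op}}^2 = f(I_{n,i}/n)$ with $f(x)=x^4+(x^2-x^3)(1+\sqrt{x^2+1})$, and you correctly reduce the task to verifying the three hypotheses of Theorem~\ref{upper_tail_bound}. The toll-boundedness argument is also in the right spirit (the paper derives it from the explicit expressions \eqref{toll-term-a}--\eqref{toll-term-b} of \citet{munsonius_10b}, but your appeal to the logarithmic asymptotics of $E[P_n]$, $E[W_n]$ is essentially equivalent).

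The genuine gap is the stochastic domination
\[
\sum_{i=1}^b\|A_i(I_n)\|_{\mathrm{op}}^2\preceq_{\mathrm{st}}1-U(1-U),
\]
which is the substantive content of the proof. Your plan --- couple $(I_{n,1}/n,\ldots,I_{n,b}/n)$ with ``a sequence of successive uniform splits'' via the increasing-tree representation --- is not a construction: a $b$-ary recursive tree's split vector is a P\'olya-urn Dirichlet-type object and does not factor as nested uniform binary splits, so it is unclear what coupling you mean or why it would be monotone in the needed sense. The paper's actual argument has two separate pieces that your proposal does not supply. First, Lemma~\ref{urn_coupling} constructs a Markov-chain coupling (via \citet[Section IV.5, Theorem (5.8)]{lindvall_92}) between the \emph{sorted} subtree-size vectors of the $PU(b+1)$ and $PU(b)$ urns, giving componentwise domination $(\vect{I}{n,(1)}{n,(b)})\preceq_{\text{st}}(\vect{J}{n,(1)}{n,(b)})$; this is where the discrete-$n$ combinatorics live. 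Second, Lemma~\ref{abschaetzung_ana} shows, by a convexity argument on $f$, that such a componentwise-dominated vector with one extra mass-carrying coordinate has a \emph{smaller} value of $\sum f$; iterating the two lemmas walks the problem down from arity $b$ to arity $2$. Only at that point does one invoke the binary-search-tree bound, which the paper takes from Lemma~2.2 of \citet{alikhan_neininger_07} rather than re-deriving.

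Finally, even in the binary case your argument is incomplete: the pointwise inequality $f(u)+f(1-u)\le 1-u(1-u)$ for $u\in[0,1]$ does not by itself give $\sum_i f(J_{n,i}/n)\preceq_{\mathrm{st}} 1-U(1-U)$ for finite $n$, because $J_{n,1}/n$ is uniform on $\{0/n,\ldots,(n-1)/n\}$, not on $[0,1]$; one still needs a coupling of the discrete uniform with the continuous one that is compatible with the map $u\mapsto f(u)+f(1-u)$ (which is not monotone on $[0,1]$). Saying the ``finite-$n$ discrepancy is absorbed through $\preceq_{\mathrm{st}}$'' is exactly the step that needs a proof; this is the content of the cited Ali Khan--Neininger lemma.
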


Using the asymptotic expansion of the expectation of the internal path length and the Wiener index, the following asymptotic tail bounds are obtained.
\begin{korollar}\label{korollar_asymptotischer_tail}~
Let $P_n$ denote the internal path length and $W_n$ be the Wiener index of a random $b$-ary recursive tree of size $n$ with edge weights $Z$ where $\|Z\|$ is bounded almost surely.
Then, there exists a constant $D>0$ such that for $t>0$ and $n\to\infty$ it holds
\begin{align*}
\lefteqn{P(|P_{n}-E[P_{n}]|>tE[P_{n}])}\qquad\\
\le&{} \exp\left(-\frac b{b-1}\frac{\mu}{D} t\log n\left(\log^{(2)}n+\log t+\alpha+o(1)\right)\right)
\end{align*}
and
\begin{align*}
\lefteqn{P(|W_{n}-E[W_{n}]|>tE[W_{n}])}\qquad\\*
&\le{}\exp\left(-\frac b{b-1}\frac\mu D t\log n\left(\log^{(2)}n+\log t+\alpha+o(1)\right)\right)
\end{align*}
where $\mu=E[Z_1]$ and $\alpha:=\log\left(b\mu/(4D(b-1)e)\right)$.
\end{korollar}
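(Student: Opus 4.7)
The plan is to translate the bound of Theorem \ref{upper_tail_bound_wp} into the desired normalisation by $E[P_n]$ (respectively $E[W_n]$), and then expand asymptotically. Write
\[
P\bigl(|P_n-E[P_n]|>tE[P_n]\bigr)=P\left(\left|\frac{P_n-E[P_n]}n\right|>s_n\right)
\]
with $s_n:=tE[P_n]/n$, and analogously $s_n:=tE[W_n]/n^2$ for the Wiener index; Theorem \ref{upper_tail_bound_wp} then applies at the threshold $s_n$.

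Next, insert the asymptotic expansions $E[P_n]=\frac{b}{b-1}\mu n\log n+O(n)$ and $E[W_n]=\frac{b}{b-1}\mu n^2\log n+O(n^2)$, which are classical for $b$-ary recursive trees and extend to weighted edges by conditioning on $Z$ (using $E[Z_1]=\mu$). Both yield $s_n=\frac{b}{b-1}\mu t\log n\,(1+O(1/\log n))$. For fixed $t>0$ and $n$ large enough, $s_n>4De^{L_0}$, so only the last branch of \eqref{bounds} is active, and the bound reads
\[
\exp\left(\frac{s_n}D-\frac{s_n}D\log\frac{s_n}{4D}\right)=\exp\left(-\frac{s_n}D\log\frac{s_n}{4De}\right).
\]
Expanding the logarithm gives
\[
\log\frac{s_n}{4De}=\log^{(2)}n+\log t+\log\frac{b\mu}{4D(b-1)e}+O(1/\log n)=\log^{(2)}n+\log t+\alpha+o(1),
\]
and multiplying by $s_n/D=\frac{b}{b-1}\frac{\mu}{D}t\log n\,(1+O(1/\log n))$ absorbs the correction into the $o(1)$ inside the bracket, since $O(1/\log n)\cdot\log^{(2)}n=o(1)$. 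This produces the stated right-tail estimate for $P_n$; the bound for $W_n$ is identical, and the left tails follow from the second half of Theorem \ref{upper_tail_bound_wp}.

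The main obstacle is ensuring that the error in the expansions of $E[P_n]$ and $E[W_n]$ is fine enough to be absorbed: the relative error $\epsilon_n$ must satisfy $\epsilon_n\log^{(2)}n=o(1)$. An $O(1/\log n)$ relative error, obtained from a direct recursion for the expected path length (the weighted analogue of the quicksort recursion), is more than sufficient. Beyond this computation, the proof is essentially a substitution into \eqref{bounds}.
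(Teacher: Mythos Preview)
Your proposal is correct and follows essentially the same route as the paper: rewrite the deviation in terms of $s_n=tE[P_n]/n$ (resp.\ $tE[W_n]/n^2$), use the asymptotics $E[P_n]=\frac{b}{b-1}\mu n\log n+O(n)$ and $E[W_n]=\frac{b}{b-1}\mu n^2\log n+O(n^2)$ (which the paper imports from \citet{munsonius_10b}), observe that $s_n\to\infty$ so that only the last branch of \eqref{bounds} is relevant, and expand. Your discussion of the error absorption is slightly more explicit than the paper's, but the argument is the same.
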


Finally, by special choices of the edge weights and the use of transfer results in \citet{munsonius_10b}, the corresponding bounds for random linear recursive trees are obtained.
The model of linear recursive trees is introduced by \citet{pittel_94}.
Starting with the root, the linear recursive tree grows node by node.
In each step the new node is attached to a randomly chosen node of the previous ones.
The probability that node $u$ is chosen is proportional to the weight $w_u=1+\beta\deg(u)$ where $\deg(u)$ is the number of children of $u$ and $\beta\in\R_{\ge 0}$ is the parameter of the tree.
This tree model encompasses as special cases the random recursive tree ($\beta=0$) and the plane oriented recursive tree ($\beta=1$).
\begin{korollar}\label{pk_linear}
Let $P_{n}$ denote the internal path length of a random linear recursive tree of size $n$ with weight function $u\mapsto 1+(b-2)\deg(u)$ for $b\in\N$ and $b\ge 2$.
Then there exists $D>0$ such that for $t>0$ and $n\to\infty$ we have for $(P_{n}-E[P_{n}])/n$ the same tail bounds as in Theorem \ref{upper_tail_bound_wp} and in particular we have for $t>0$ and $n\to\infty$
\begin{align*}
\lefteqn{P(|P_{n}-E[P_{n}]|>tE[P_{n}])}\qquad\\
&\le{} \exp\left(-\frac 1{b-1}\frac 1D t\log n\left(\log^{(2)}n+\log t+\alpha+o(1)\right)\right)
\end{align*}
with $\alpha:=-\log\left(4D(b-1)e\right)$.
\end{korollar}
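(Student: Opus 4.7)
The plan is to reduce the linear recursive tree with weight function $u\mapsto 1+(b-2)\deg(u)$ to a $b$-ary recursive tree with appropriate weighted edges via the transfer results in \citet{munsonius_10b}, and then invoke Theorem \ref{upper_tail_bound_wp}. The starting point is that, under this specific weight function, the distribution of the internal path length $P_n$ coincides (in distribution, or more precisely in the sense of the transfer established in \citet{munsonius_10b}) with that of a suitable functional of a random $b$-ary recursive tree with edge weights $Z$ where $\|Z\|$ is bounded almost surely. This is the essential step that allows the machinery of Theorem \ref{upper_tail_bound_wp} to be applied, and it is the point where the special algebraic structure of the weight $1+(b-2)\deg(u)$ (rather than some other $\beta$-weight) is used.

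Once this identification is in place, the first claim of the corollary, namely that $(P_n-E[P_n])/n$ satisfies the five-piece tail bound of Theorem \ref{upper_tail_bound_wp}, follows immediately by applying that theorem to the $P$-component of the vector $X_n$ defined there. Thus the non-asymptotic part of the corollary is direct.

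For the asymptotic estimate it is enough to mimic the derivation of Corollary \ref{korollar_asymptotischer_tail}, only with the linear-tree expansion of $E[P_n]$ substituted in. For the linear recursive tree with weight function $1+(b-2)\deg(u)$ one has an expansion of the form $E[P_n]=\tfrac{1}{b-1}\,n\log n+O(n)$, which is the analogue of the $\tfrac{b\mu}{b-1}\,n\log n$ term appearing in Corollary \ref{korollar_asymptotischer_tail} (the factor $b\mu$ is replaced by $1$, which explains both the prefactor $\tfrac{1}{b-1}\tfrac{1}{D}$ and the change in $\alpha$ from $\log(b\mu/(4D(b-1)e))$ to $-\log(4D(b-1)e)$). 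Substituting $t\mapsto tE[P_n]/n$ into the last regime ($t>4De^{L_0}$) of the bound \eqref{bounds}, i.e.\ the regime $\exp(\tfrac{t}{D}-\tfrac{t}{D}\log(t/4D))$ which dominates for $t$ of order $\log n$, and expanding $\log(tE[P_n]/(nD))=\log t+\log^{(2)}n-\log(D(b-1))+o(1)$, produces exactly the stated asymptotic form.

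The only nontrivial step is the first one: verifying that the transfer from \citet{munsonius_10b} delivers the functional of a $b$-ary recursive tree with bounded edge weights in precisely the form required by Theorem \ref{upper_tail_bound_wp}, i.e.\ with the correct normalization $X_n=\frac{1}{n}(P_n-E[P_n])$ and with the stochastic domination hypothesis on the coefficient matrices $A_i(I_n)$ preserved under the transfer. The remaining computations are essentially substitution of the known first-order expansion of $E[P_n]$ into the tail bound and simplification of the logarithmic terms.
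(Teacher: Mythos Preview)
Your approach is correct and essentially the same as the paper's: reduce to the weighted $b$-ary recursive tree via the transfer result in \citet{munsonius_10b}, apply Theorem \ref{upper_tail_bound_wp}, then plug in the expansion of $E[P_n]$ as in Corollary \ref{korollar_asymptotischer_tail} with $b\mu=1$.

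One point is overcomplicated in your sketch. The transfer from \citet{munsonius_10b} is not a statement about the recursion structure but a direct distributional identity
\[
P_n \eqd \tilde P_{n-1} + (n-1),
\]
where $\tilde P_{n-1}$ is the internal path length of a random $b$-ary recursive tree of size $n-1$ with edge weight vector $Z$ a uniform random permutation of $(1,0,\ldots,0)\in\R^b$ (so $\mu=1/b$ and $\|Z\|$ is bounded). Centering kills the deterministic shift, giving
\[
\frac{P_n-E[P_n]}{n}\eqd\frac{\tilde P_{n-1}-E[\tilde P_{n-1}]}{n},
\]
and now Theorem \ref{upper_tail_bound_wp} applies to the right-hand side directly. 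There is therefore nothing to check about ``preserving the stochastic domination hypothesis on the $A_i(I_n)$ under the transfer'': the stochastic domination was already established inside the proof of Theorem \ref{upper_tail_bound_wp} for the $b$-ary tree, and you are simply quoting its conclusion for $\tilde P_{n-1}$. Your asymptotic computation (substituting $tE[P_n]/n\sim \tfrac{1}{b-1}t\log n$ into the last regime of \eqref{bounds}) is exactly what the paper does via Corollary \ref{korollar_asymptotischer_tail}.
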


\begin{korollar}\label{wk_linear}
Let $W_{n}$ denote the Wiener index of a random linear recursive tree of size $n$ with weight function $u\mapsto 1+(b-2)\deg(u)$ for $b\in \N$ and $b\ge 2$.
Then there exists $D>0$ such that we have for $t>0$ and $n\to\infty$
\begin{align*}
\lefteqn{P(|W_{n}-E[W_{n}]|>tE[W_{n}])}\qquad\\
&\le{} \exp\left(-\frac 1{b-1}\frac 1D t\log n\left(\log^{(2)}n+\log t+\alpha+o(1)\right)\right)
\end{align*}
with $\alpha:=-\log\left(4D(b-1)e\right)$.
\end{korollar}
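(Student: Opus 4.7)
The plan is to mirror the proof of Corollary \ref{pk_linear}, with the internal path length $P_n$ replaced throughout by the Wiener index $W_n$. The key ingredient is the transfer result of \citet{munsonius_10b}, which represents (in distribution) a random linear recursive tree of size $n$ with weight function $u\mapsto 1+(b-2)\deg(u)$ as a random $b$-ary recursive tree of size $n$ whose edges carry independent, almost surely bounded random weights $Z=(Z_1,\ldots,Z_b)$ with $E[Z_1]=1/b$. Because this coupling preserves the lengths of paths between corresponding pairs of nodes, it identifies the Wiener index of the linear recursive tree with the Wiener index of the weighted $b$-ary recursive tree, and simultaneously identifies the expectations $E[W_n]$ on both sides.

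Granted this identification, I would first apply Theorem \ref{upper_tail_bound_wp} to the centered and scaled Wiener index of the weighted $b$-ary recursive tree in order to obtain the piecewise bounds \eqref{bounds} for $(W_n-E[W_n])/n^2$, and then invoke the Wiener-index half of Corollary \ref{korollar_asymptotischer_tail} to pass to the asymptotic regime. Inserting $\mu=1/b$ into the coefficients appearing there collapses $\frac{b}{b-1}\frac{\mu}{D}$ to $\frac{1}{b-1}\frac{1}{D}$ and $\alpha=\log\bigl(b\mu/(4D(b-1)e)\bigr)$ to $-\log(4D(b-1)e)$, which is exactly the form in the statement of Corollary \ref{wk_linear}.

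The hard part is verifying from \citet{munsonius_10b} that the transfer really delivers an identity of Wiener indices (and not merely of node depths, which is all that would be required for Corollary \ref{pk_linear}), and that the weights $Z_i$ arising in the construction are uniformly bounded almost surely so that the hypothesis $\|d(I_n,Z)\|\le D$ of Theorem \ref{upper_tail_bound_wp} is met. Once these two facts are read off from the cited work, no further computation is needed: the tail bound is obtained by direct substitution $\mu=1/b$ into Corollary \ref{korollar_asymptotischer_tail}.
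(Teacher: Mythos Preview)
Your proposal rests on an incorrect reading of the transfer result. The identity from \citet{munsonius_10b} is not $W_n\eqd\tilde W_n$ but rather
\[
W_{n}\eqd\tilde W_{n-1}-\tilde P_{n-1}+(n-1)^2,
\]
so after centering one has
\[
W_n-E[W_n]\eqd\bigl(\tilde W_{n-1}-E[\tilde W_{n-1}]\bigr)-\bigl(\tilde P_{n-1}-E[\tilde P_{n-1}]\bigr).
\]
The coupling sends a linear recursive tree of size $n$ to a weighted $b$-ary recursive tree of size $n-1$, and pairwise distances are \emph{not} preserved exactly; the correction term $-\tilde P_{n-1}$ appears. Hence the situation is genuinely different from Corollary \ref{pk_linear}, where $P_n-E[P_n]\eqd\tilde P_{n-1}-E[\tilde P_{n-1}]$ holds cleanly.

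The paper's proof handles the extra centered path-length term separately: from $\Var(\tilde P_{n-1})=\Theta(n^2)$ and Chebyshev one gets $P\bigl(|\tilde P_{n-1}-E[\tilde P_{n-1}]|/n^2>\epsi\bigr)=O(n^{-2})$, so this fluctuation is negligible on the scale $n^2$ relevant to $\tilde W_{n-1}$. One then splits the deviation event, absorbs the $O(n^{-2})$ piece into the $o(1)$ in the exponent, and applies Corollary \ref{korollar_asymptotischer_tail} to the $\tilde W_{n-1}$ part with $\mu=1/b$. Your substitution $\mu=1/b$ at the end is correct, but without controlling $\tilde P_{n-1}-E[\tilde P_{n-1}]$ first the argument does not go through.
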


Using the WKB method \citet{knessl_szpankowski_99} argue for very sharp bounds for the tail of the limit distribution of the internal path length of random binary search trees.
In \citet{rueschendorf_schopp_07a} general upper bounds for tails of distributions given by a recursion of sum type are shown in the one-dimensional case.
For simply generated trees, asymptotics for the right tail of the limit distribution of the total path length and the Wiener index are shown in \citet{chassaing_janson_04} and \citet{fill_janson_09}.

This paper is organized as follows.
In section \ref{sec-2}, we consider the general recursion formula \eqref{recursion_tail} and give a proof for the upper tail bound in Theorem \ref{upper_tail_bound}.
The $b$-ary recursive tree with weighted edges is defined in section \ref{sec-3}.
We then show the stochastic domination condition in this case by a coupling argument and conclude Theorem \ref{upper_tail_bound_wp} and Corollary \ref{korollar_asymptotischer_tail} in section \ref{sec-3.1}.
Finally, we conclude by transfer results from \citet{munsonius_10b} the upper tail bounds in case of random linear recursive trees (Corollary \ref{pk_linear} and Corollary \ref{wk_linear}) in section \ref{sec-3.2}.
At the end, we give a summary of corresponding results concerning lower tail bounds for the Wiener index in section \ref{sec-4}.

We denote by $\|\cdot\|$ the Euclidean norm in $\R^k$ and by $\|\cdot\|_\mathrm{op}$ the operator norm for matrices.
Equality in distribution is written as $\eqd$.
For functions $f$ and $g$ we write $f=o(g)$, $f=O(g)$ and $f=\Theta(g)$ if $\lim_{n\to\infty}f(n)/g(n)=0$, $|f(n)/g(n)|\le C$ and $c\le |f(n)/g(n)|\le C$ for all $n$ with some constants $0<c\le C<\infty$ respectively.

\subsection*{Acknowledgment}
The author thanks Ralph Neininger for posing the problem of tail bounds and pointing some related literature out to him.

\section{Upper tail bound for a general recursion}\label{sec-2}
We consider a random $k$-dimensional vector $X_n=(\vect{X}{n,1}{n,k})$ which solves the distributional recursion formula
\[
X_n\eqd\sum_{i=1}^bA_i(I_n)X_{I_{n,i}}^{(i)}+d(I_n,Z)
\]
where $X_n^{(1)},\ldots,X_n^{(b)}$ have the same distribution as $X_n$, $d:\R^b\times\R^b\to\R^k$ and $A_i:\R^b\to\R^{k\times k}$ are deterministic functions, $Z\in\R_{\ge 0}^b$ and $I_n=(\vect{I}{n,1}{n,b})\in\{\num{0}{n-1}\}^b$ are random vectors with $E[d(I_n,Z)]=0$, and $X_n^{(1)},\ldots,X_n^{(b)}$, $I_n$, $Z$ are independent.

We denote by $\preceq_{\mathrm{st}}$ the stochastic order and by $U$ a random variable uniformly distributed in $[0,1]$.

\begin{lemma}\label{chernoff_small_s}
Let $X_n$ be a solution of the distributional recursion \eqref{recursion_tail}.
Assume that $X_1=0$, $\|d(I_n,Z)\|\le D$ almost surely for all $n\in\N$ and for a constant $D\in\R$ and that
\eqn\label{assumption_stochastic_order}
\sum_{i=1}^b\|A_i(I_n)\|_{\mathrm{op}}^2\preceq_{\mathrm{st}}1-U(1-U)
\nqe
as well as $\|A_i\|_{\mathrm{op}}\le 1$.
Let $\gamma\approx 2.0047$ be the positive solution of
\begin{align*}
\frac {12}{7}{}={}&e^{\frac 2\gamma}-\frac 2\gamma&\text{and}&&K{}={}&\frac 52D^2\gamma^2.
\end{align*}
Then we have for all $s\in\R^k$ with $\|s\|\le 1/(\gamma D)$ and for all $n\in\N$
\[
E[\exp(\skalar{s}{X_n})]\le \exp\left(K\|s\|^2\right).
\]
\end{lemma}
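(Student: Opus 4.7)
I would prove the lemma by induction on $n$. The base case $n = 1$ is immediate since $X_1 = 0$ gives $E[e^{\langle s, X_1\rangle}] = 1 \le \exp(K\|s\|^2)$. For the inductive step with $n \ge 2$, I substitute the recursion \eqref{recursion_tail} into $E[e^{\langle s, X_n\rangle}]$ and condition on $(I_n, Z)$; by the joint independence of $X_n^{(1)}, \ldots, X_n^{(b)}$ from $(I_n, Z)$ this factorises as
\[
E[e^{\langle s, X_n\rangle}] = E\Big[e^{\langle s, d(I_n, Z)\rangle}\prod_{i=1}^b E\big[e^{\langle A_i(I_n)^T s,\, X_{I_{n,i}}^{(i)}\rangle}\,\big|\,I_n\big]\Big].
\]
Since $\|A_i(I_n)\|_{\mathrm{op}} \le 1$, each vector $A_i(I_n)^T s$ still satisfies $\|A_i(I_n)^T s\| \le 1/(\gamma D)$, so the induction hypothesis applies to every inner factor. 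Writing $T := \sum_{i=1}^b \|A_i(I_n)\|_{\mathrm{op}}^2$, this yields
\[
E[e^{\langle s, X_n\rangle}] \le E\!\left[\exp(\langle s, d(I_n, Z)\rangle + K\|s\|^2 T)\right].
\]

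Next I would bound the $d$-contribution by Taylor expansion. Since $|\langle s, d(I_n, Z)\rangle| \le \|s\|D \le 1/\gamma$ almost surely, an inequality of the form $e^x \le 1 + x + \eta_\gamma x^2$ valid on $[-1/\gamma,1/\gamma]$ applies, with an explicit constant $\eta_\gamma$ depending on $\gamma$. Combined with $\langle s, d\rangle^2 \le \|s\|^2 D^2$ and the zero-mean hypothesis $E[d(I_n, Z)] = 0$, this reduces matters to
\[
E[e^{\langle s, X_n\rangle}] \le \big(1 + \eta_\gamma \|s\|^2 D^2\big)\, E[e^{K\|s\|^2 T}],
\]
after which the stochastic-domination hypothesis \eqref{assumption_stochastic_order}, applied to the monotone function $t \mapsto e^{K\|s\|^2 t}$, provides
\[
E[e^{K\|s\|^2 T}] \le E[e^{K\|s\|^2(1 - U(1-U))}] = e^{K\|s\|^2}\int_0^1 e^{-K\|s\|^2 u(1-u)}\, du.
\]
The induction step therefore closes as soon as the scalar inequality
\[
\big(1 + \eta_\gamma \|s\|^2 D^2\big)\int_0^1 e^{-K\|s\|^2 u(1-u)}\,du \le 1
\]
has been established for every $\|s\| \le 1/(\gamma D)$.

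Substituting $r := K\|s\|^2$, so that $r \in [0, 5/2]$ (using $K = \tfrac{5}{2}\gamma^2 D^2$), turns this into a one-variable inequality on $[0, 5/2]$, and this is where I expect the main obstacle. The defining equation $12/7 = e^{2/\gamma} - 2/\gamma$ is precisely the calibration that makes the resulting inequality tight at the boundary $r = 5/2$; the remaining work is to establish monotonicity (or a suitable convexity) of the product $(1 + \eta_\gamma \cdot 2r/(5\gamma^2))\cdot \int_0^1 e^{-r u(1-u)}\, du$ on $[0, 5/2]$, so that the endpoint bound suffices throughout. A subsidiary technicality is the residual linear cross term $E[\langle s, d\rangle\, e^{K\|s\|^2 T}]$ left over from the Taylor expansion: because the zero-mean hypothesis on $d$ is only unconditional, I would exploit the independence $Z \perp I_n$ to integrate out $Z$ first, leaving an expression in $\langle s, E_Z[d(I_n, Z)]\rangle$ that is either zero by construction or, being bounded by $\|s\|D \le 1/\gamma$, can be absorbed into the quadratic term at the cost of a mild enlargement of $\eta_\gamma$.
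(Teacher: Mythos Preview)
Your inductive skeleton matches the paper's: base case, conditioning on $(I_n,Z)$, factorising via independence, and feeding $A_i(I_n)^Ts$ back into the hypothesis using $\|A_i\|_{\mathrm{op}}\le 1$. The divergence, and the genuine gap, is in how you decouple $d(I_n,Z)$ from $T=\sum_i\|A_i(I_n)\|_{\mathrm{op}}^2$. After Taylor-expanding $e^{\langle s,d\rangle}$ inside the joint expectation you are left with the cross term $E\bigl[\langle s,d(I_n,Z)\rangle\, e^{K\|s\|^2 T}\bigr]$, and neither of your proposed fixes disposes of it. Integrating out $Z$ gives $E\bigl[\langle s,E_Z[d(I_n,Z)]\rangle\, e^{K\|s\|^2 T}\bigr]$, but the hypothesis is only $E[d(I_n,Z)]=0$, not $E_Z[d(x,Z)]=0$ for each $x$; there is no ``zero by construction'' here. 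And the crude bound $|\langle s,E_Z[d]\rangle|\le \|s\|D$ produces a contribution of order $\|s\|$, not $\|s\|^2$, so for small $\|s\|$ it dominates the quadratic term rather than being absorbed by it---the inequality $(1+\|s\|D+\eta_\gamma\|s\|^2D^2)\int_0^1 e^{-K\|s\|^2 u(1-u)}\,du\le 1$ fails at first order near $s=0$.

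The paper sidesteps this entirely by Cauchy--Schwarz. After using the stochastic-domination coupling to replace $T$ by $1-U(1-U)$ inside the joint expectation (pointwise, with $U$ marginally uniform but coupled to $I_n$), one bounds
\[
\Bigl(E\bigl[e^{\langle s,d(I_n,Z)\rangle}e^{-K\|s\|^2 U(1-U)}\bigr]\Bigr)^2
\;\le\; E\bigl[e^{2\langle s,d(I_n,Z)\rangle}\bigr]\,E\bigl[e^{-2K\|s\|^2 U(1-U)}\bigr].
\]
Now the two factors are genuinely separated: the first uses only the \emph{unconditional} centering $E[d(I_n,Z)]=0$ to get $E[e^{2\langle s,d\rangle}]\le 1+\|s\|^2D^2\gamma^2(e^{2/\gamma}-1-2/\gamma)$, and the second uses only the marginal law of $U$ together with $e^{-x}\le 1-x+x^2/2$. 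The defining equation $12/7=e^{2/\gamma}-2/\gamma$ is calibrated to \emph{this} pair of bounds (the resulting condition is a biquadratic in $\|s\|$ vanishing at $\|s\|=1/(\gamma D)$), not to the one-variable inequality you wrote down; with your direct Taylor route the constants would have to be reworked even if the cross term were handled.
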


\begin{proof}{}~
We show the claim by induction on $n$.
For $n=1$ we have $X_1=0$ and there is nothing to show.

Using the recursion formula and the given independence we get for $n\ge 2$
\begin{align*}
\lefteqn{E\left[\exp\left(\skalar{s}{X_n}\right)\right]}\quad\\
{}={}&E\left[\exp\left(\skalar{s}{\sum_{i=1}^bA_i(I_n)X_{I_{n,i}}^{(i)}+d(I_n,Z)}\right)\right]\\
{}={}&\kern-3ex\sum_{x\in\{0,\ldots,n-1\}^b}\kern-3ex E\left[e^{\skalar{s}{d(x,Z)}}\right]\prod_{i=1}^b E\left[\exp\left(\skalar{(A_i(x))^Ts}{X_{x_i}^{(i)}}\right)\mid I_n=x\right]P(I_n=x).
\end{align*}
The assumption $\|A_i(x)\|_{\mathrm{op}}\le 1$ implies $\|A_i(x)^Ts\|\le\|s\|\|A_i(x)\|_{\mathrm{op}}\le \|s\|$.
Since for every $i\in\{\num{1}{b}\}$ we have $x_i\le n-1$ we can apply the induction hypothesis.
Therefore, we obtain
\begin{align}\label{induction_step}
\lefteqn{E\left[\exp\left(\skalar{s}{X_n}\right)\right]}\qquad\notag\\
{}\le{}&\sum_{x\in\{0,\ldots,n-1\}^b}E\left[\exp\left(\skalar{s}{d(x,Z)}\right)\right]\exp\left(\sum_{i=1}^b K\|s\|^2\|A_i(x)\|_\mathrm{op}^2\right)P(I_n=x)\notag\\
{}={}&E\left[\exp\left(\skalar{s}{d(I_n,Z)}\right)\exp\left(K\|s\|^2\sum_{i=1}^b\|A_i(I_n)\|_\mathrm{op}^2\right)\right].
\end{align}
By condition \eqref{assumption_stochastic_order} and monotonicity of $x\mapsto e^{\lambda x}$ we conclude
\begin{align}\label{alikhan_neininger}
E\left[\exp\left(\skalar{s}{X_n}\right)\right]{}\le{}&E\left[\exp\left(\skalar{s}{d(I_n,Z)}\right)\exp\left(K\|s\|^2(1-U(1-U))\right)\right]\notag\\[1.3ex]
{}={}&e^{K\|s\|^2}E\left[\exp\left(\skalar{s}{d(I_n,Z)}\right)\exp\left(-K\|s\|^2U(1-U)\right)\right].
\end{align}
Hence, using the Cauchy--Schwarz inequality it suffices to show that
\begin{align}\label{zuzeigen_tail}
\lefteqn{\left(E\left[\exp\left(\skalar{s}{d(I_n,Z)}\right)\exp\left(-K\|s\|^2 U(1-U)\right)\right]\right)^2}\qquad\notag\\[1.3ex]
{}\le{}&E\left[\exp\left(2\skalar{s}{d(I_n,Z)}\right)\right]E\left[\exp\left(-2K\|s\|^2U(1-U)\right)\right]\\[1.3ex]
{}\le{}&1\notag.
\end{align}
By assumption $\|d(I_n,Z)\|\le D$ holds almost surely and $E[d(I_n,Z)]=0$.
Thus, we get for $\|s\|\le 1/(\gamma D)$
\begin{align}\label{faktor1}
E\left[\exp\left(2\skalar{s}{d(I_n,Z)}\right)\right]{}={}&1+E\left[\skalar{s}{d(I_n,Z)}^2\sum_{k=2}^\infty\frac{2^k\skalar{s}{d(I_n,Z)}^{k-2}}{k!}\right]\notag\\
{}\le{}&1+\|s\|^2D^2\sum_{k=2}^\infty\frac{2^k}{k!\gamma^{k-2}}\notag\\
{}={}&1+\|s\|^2D^2\gamma^2\left(e^{\frac 2\gamma}-1-\frac 2\gamma\right).
\end{align}
For all $x>0$ we have
\[
e^{-x}\le 1-x+\frac {x^2}{2}.
\]
This yields for the second factor in \eqref{zuzeigen_tail}
\begin{align}\label{faktor2}
E\left[e^{-2K\|s\|^2U(1-U)}\right]{}\le{}&E\left[1-2K\|s\|^2U(1-U)+2K^2\|s\|^4U^2(1-U)^2\right]\notag\\
{}={}&1-\frac 13K\|s\|^2+\frac 1{15}K^2\|s\|^4.
\end{align}
With \eqref{faktor1} and \eqref{faktor2} we see that \eqref{zuzeigen_tail} will follow from
\[
\left(1+\|s\|^2D^2\gamma^2\left(e^{\frac 2\gamma}-1-\frac 2\gamma\right)\right)\left(1-\frac 13K\|s\|^2+\frac 1{15}K^2\|s\|^4\right)\le 1.
\]
This in turn is equivalent to $f(\|s\|)\le 0$ for
\begin{multline}
f(\|s\|):=D^2\gamma^2\left(e^{\frac 2\gamma}-1-\frac 2\gamma\right)-\frac 13K\\
-\left(\frac 13KD^2\gamma^2\left(e^{\frac 2\gamma}-1-\frac 2\gamma\right)-\frac 1{15}K^2\right)\|s\|^2\\
+\frac 1{15}K^2D^2\gamma^2\left(e^{\frac 2\gamma}-1-\frac 2\gamma\right)\|s\|^4.
\end{multline}
We substitute $K=5/2D^2\gamma^2$ and $e^{2/\gamma}-1-2/\gamma=5/7$ and obtain
\[
f(\|s\|)=D^2\gamma^2\left(\frac 57-\frac 56+\left(\frac 5{12}-\frac {25}{42}\right)(D\gamma\|s\|)^2+\frac {25}{84}(D\gamma\|s\|)^4\right).
\]
We see that $f(0)\le 0$ and $f(1/(\gamma D))=0$.
Since $f$ is a biquadratic function in $\|s\|$ with a positive coefficient corresponding to $\|s\|^4$ and $f(0)\le 0$ it has at most two real roots.
On the interval between these two roots the function is negative and outside this interval the function takes only positive values. 
Since $f(1/(\gamma D))=0$ we therefore get $f(\|s\|)\le 0$ for all $s$ with $0\le\|s\|\le 1/(\gamma D)$.
\end{proof}

\begin{lemma}\label{chernoff_large_s}
Let $X_n$ be a solution of the distributional recursion \eqref{recursion_tail}.
Assume that $X_1=0$, $\|d(I_n,Z)\|\le D$ almost surely for all $n\in\N$ and for a constant $D\in\R$ and that
\[
\sum_{i=1}^b\|A_i(I_n)\|_{\mathrm{op}}^2\preceq_{\mathrm{st}}1-U(1-U)
\]
as well as $\|A_i\|_{\mathrm{op}}\le 1$.
Let $\gamma\approx 2.0047$ be the positive solution of $\frac {12}{7}=e^{\frac 2\gamma}-\frac 2\gamma$ and $L_0\approx 5.0177$ be the largest root of $e^L=6L^2$.
Then we have for $1/(\gamma D)\le\|s\|\le L$
\[
E\left[\exp\left(\skalar{s}{X_n}\right)\right]\le \exp\left(K_L\|s\|^2\right)
\]
where
\[
K_L:=\begin{cases}24D^2,&\text{ for $1/(\gamma D)< L\le L_0/D$,}\\4\frac {1}{L^2}e^{LD},&\text{ for $L_0/D< L$.}\end{cases}
\]
\end{lemma}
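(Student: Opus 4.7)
The plan is to emulate the inductive scheme of the proof of Lemma~\ref{chernoff_small_s}, modifying only the estimates that break down for large $\|s\|$. In particular, the Taylor expansion of $\exp(2\skalar{s}{d(I_n,Z)})$ used in \eqref{faktor1} is no longer useful when $\|s\|D$ is bounded away from zero, so I would switch to the crude bound $|\skalar{s}{d(I_n,Z)}|\le\|s\|D\le LD$ (and use $E[d(I_n,Z)]=0$ to kill the linear term).

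\medskip

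The argument is again an induction on $n$ with trivial base case $n=1$. In the inductive step I reproduce \eqref{induction_step} and \eqref{alikhan_neininger} verbatim with $K$ replaced by $K_L$. A subtlety: the induction hypothesis must be applied to the vector $A_i(I_n)^T s$, whose norm is at most $\|s\|\le L$ but may fall below $1/(\gamma D)$; on that piece I would invoke Lemma~\ref{chernoff_small_s} instead, which is compatible because $K=\tfrac{5}{2}D^2\gamma^2\approx 10.05\,D^2\le 24\,D^2\le K_L$ for both branches of the definition of $K_L$, so the small-$s$ bound $\exp(K\|A_i(I_n)^T s\|^2)$ is dominated by $\exp(K_L\|A_i(I_n)^T s\|^2)$. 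Stochastic dominance followed by Cauchy--Schwarz then reduces matters to
\[
E\!\left[e^{2\skalar{s}{d(I_n,Z)}}\right]\cdot E\!\left[e^{-2K_L\|s\|^2 U(1-U)}\right]\le 1,
\]
the first factor being bounded by $\exp(2LD)$ and the second by an estimate of order $1/(K_L\|s\|^2)$, obtainable either from the explicit density $2(1-4v)^{-1/2}$ of $U(1-U)$ on $[0,\tfrac{1}{4}]$ or by an elementary split of the integral at $u\in\{\tfrac{1}{4},\tfrac{3}{4}\}$, where $u(1-u)\ge\tfrac{3}{16}$ on the middle piece and $u(1-u)\ge\tfrac{3}{4}\min(u,1-u)$ on the outer pieces.

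\medskip

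Finally I would verify that the claimed $K_L$ makes the product of the two factor bounds at most $1$ throughout $\|s\|\in[1/(\gamma D),L]$. The two branches correspond to two different regimes: for $L\le L_0/D$ the binding constraint fixes the constant $K_L=24\,D^2$, while for $L>L_0/D$ the right endpoint $\|s\|=L$ takes over and forces the exponentially growing $K_L=(4/L^2)e^{LD}$. Continuity at the transition $L=L_0/D$ is guaranteed by the defining equation $e^{L_0}=6L_0^2$, under which the two formulas agree. I expect the main obstacle to be this case split and the simultaneous verification of the inequality at both endpoints of $[1/(\gamma D),L]$, where the relevant quantity $\|s\|\mapsto e^{c\|s\|D}/\|s\|^2$ is non-monotone and attains its interval maximum at one endpoint or the other depending on the size of $L$; the threshold $L_0/D$ is exactly the value of $L$ at which the endpoint giving the binding constraint changes.
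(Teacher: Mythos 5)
Your overall inductive scheme is right, and your observation about the induction step is a genuine improvement over the paper's exposition: when $\|A_i(I_n)^Ts\|$ falls below $1/(\gamma D)$ you must indeed invoke Lemma~\ref{chernoff_small_s}, and your check that $K=\tfrac52\gamma^2D^2\le 24D^2\le K_L$ (with equality at $L=L_0/D$ since $e^{L_0}=6L_0^2$, and $x\mapsto e^{Dx}/x^2$ increasing beyond $L_0/D$) is exactly what makes that fallback legitimate. The paper silently elides this point.

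However, there is a real gap in the reduction. You propose to go through Cauchy--Schwarz, as in Lemma~\ref{chernoff_small_s}, and show
\[
E\!\left[e^{2\skalar{s}{d(I_n,Z)}}\right]\cdot E\!\left[e^{-2K_L\|s\|^2U(1-U)}\right]\le 1,
\]
bounding the first factor by $e^{2D\|s\|}$. That doubles the exponential cost relative to what the stated $K_L$ can absorb. Combined with the Fill--Janson estimate
$E[e^{-2K\|s\|^2U(1-U)}]\le\frac{1-e^{-K\|s\|^2/2}}{K\|s\|^2/2}$, the Cauchy--Schwarz route requires
$e^{|\lambda|}\,\frac{1-e^{-K_M\lambda^2/2}}{K_M\lambda^2/2}\le 1$
with $\lambda=2D\|s\|$ and $K_M=K_L/(4D^2)$, so $K_M=6$ on the first branch, whereas the inequality is only valid for $K_M\ge 12$. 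Concretely, at $\|s\|=L_0/D$ one gets $e^{2L_0}\cdot\frac{1}{12L_0^2}\approx 76\gg 1$, and in fact no constant multiple of $D^2$ makes the product $\le 1$ there, since $e^{2L_0}/(cL_0^2)=(e^{L_0}/(6L_0^2))\cdot(6e^{L_0}/c)=6e^{L_0}/c$. The threshold $L_0$ is calibrated precisely to the inequality $e^{L_0}=6L_0^2$, which is what you get with a \emph{single} power of $e^{D\|s\|}$. So the Cauchy--Schwarz factorization cannot recover the stated constants.

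The fix (and what the paper does) is to skip Cauchy--Schwarz entirely in this regime and use the almost-sure pointwise bound $e^{\skalar{s}{d(I_n,Z)}}\le e^{D\|s\|}$ directly inside the expectation, reducing the target to
$e^{D\|s\|}\,E[e^{-K_L\|s\|^2U(1-U)}]\le 1$,
and then to apply \eqref{fill_janson_1} with $K=K_L/2$ and \eqref{fill_janson_2} with $\lambda=D\|s\|$, $M=LD$, $K_M=K_L/(2D^2)$. This also removes the need to mention $E[d(I_n,Z)]=0$ at all --- your parenthetical about ``killing the linear term'' is a remnant of the Taylor-expansion argument from Lemma~\ref{chernoff_small_s} and plays no role here. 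Finally, a smaller issue: writing the first factor as the uniform $e^{2LD}$ rather than $e^{2\|s\|D}$ would be far too lossy at the left endpoint $\|s\|=1/(\gamma D)$; you do need the $\|s\|$-dependence throughout, as your closing paragraph implicitly acknowledges. Your plan to re-derive the Fill--Janson integral estimates from the density $2(1-4v)^{-1/2}$ of $U(1-U)$ rather than cite them is viable but is doing real work the paper outsources; if you go that route, be aware that the ``order $1/(K_L\|s\|^2)$'' heuristic for the second factor only holds in the large-$\|s\|$ regime, and a closed-form bound like $\frac{1-e^{-K_L\|s\|^2/4}}{K_L\|s\|^2/4}$ is what is actually needed to push the argument through uniformly.
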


\begin{proof}{}
We again use induction on $n$.
For $n=1$ there is nothing to show.
We use the same arguments as in the beginning of the proof of Lemma \ref{chernoff_small_s} and get \eqref{alikhan_neininger}:
\[
E\left[e^{\skalar{s}{X_n}}\right]\le e^{K_L\|s\|^2}E\left[\exp\left(\skalar{s}{d(I_n,Z)}\right)\exp\left(-K_L\|s\|^2U(1-U)\right)\right]
\]
for a random variable $U$ which is uniformly distributed on $[0,1]$.
Hence, it suffices to prove \eqref{zuzeigen_tail} under the new assumptions.
Since $\|d(I_n,Z)\|\le D$ almost surely the proof is completed by showing
\[
e^{D\|s\|}E\left[e^{-K_L\|s\|^2U(1-U)}\right]\le 1.
\]
\citet[Section 4]{fill_janson_01} proved that for any $K>0$
\eqn\label{fill_janson_1}
E\left[\exp\left(-2K\|s\|^2U(1-U)\right)\right]\le\frac{1-\exp\left(-K\frac{\|s\|^2}{2}\right)}{K\frac{\|s\|^2}{2}}
\nqe
and for $0.42\le|\lambda|\le M$
\begin{align}\label{fill_janson_2}
e^{|\lambda|}\frac{1-\exp\left(-K_M\frac{\lambda^2}{2}\right)}{K_M\frac{\lambda^2}{2}}{}\le{}& 1&\text{when}&&K_M{}={}&\begin{cases}12,&\text{ for $M\le L_0$,}\\2e^M/M^2,&\text{ for $L_0<M$}.\end{cases}
\end{align}
In the present situation, it follows
\begin{align*}
e^{D\|s\|}E\left[e^{-K_L\|s\|^2U(1-U)}\right]{}\le{}& e^{D\|s\|}\frac{1-\exp\left(-\frac{K_L}{2D^2}\frac{D^2\|s\|^2}{2}\right)}{\frac{K_L}{2D^2}\frac{D^2\|s\|^2}{2}}\\
\le{}&1
\end{align*}
when $1/\gamma\le D\|s\|\le LD$ and
\[
K_L=\begin{cases}24D^2,&\text{ for $L\le L_0/D$,}\\4\frac 1{L^2}e^{LD},&\text{ for $L_0/D< L$.}\end{cases}
\]
Thus, we obtain the claim because it is $1/\gamma \ge 0.42$.
\end{proof}

We summarize the results of the two preceding lemmas.
\begin{korollar}\label{kor_tail}
Let $X_n$ be a solution of the distributional recursion \eqref{recursion_tail}.
Assume that $X_1=0$, $\|d(I_n,Z)\|\le D$ almost surely for all $n\in\N$ and for a constant $D\in\R$ and that
\[
\sum_{i=1}^b\|A_i(I_n)\|_{\mathrm{op}}^2\preceq_{\mathrm{st}}1-U(1-U)
\]
as well as $\|A_i\|_{\mathrm{op}}\le 1$.
Let $\gamma\approx 2.0047$ be the positive solution of $12/7=e^{2/\gamma}-2/\gamma$ and $L_0\approx 5.0177$ be the largest root of $e^L=6L^2$.
Then we have for every $s$ and $n\ge 1$
\[
E\left[\exp\left(\skalar{s}{X_n}\right)\right]\le\begin{cases}\exp\left(\frac 52\gamma^2D^2\|s\|^2\right),&\text{ for $0\le\|s\|\le 1/(\gamma D)$},\\\exp\left(24D^2\|s\|^2\right),&\text{ for $1/(\gamma D)<\|s\|\le L_0/D$},\\\exp\left(4e^{D\|s\|}\right),&\text{ for $L_0/D<\|s\|$}.\end{cases}
\]
\end{korollar}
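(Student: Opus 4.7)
The corollary is an immediate synthesis of Lemma \ref{chernoff_small_s} and Lemma \ref{chernoff_large_s}; the only real content is picking the right cutoff $L$ in the second lemma for each range of $\|s\|$.

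My plan is to treat the three ranges of $\|s\|$ separately. For $0\le\|s\|\le 1/(\gamma D)$, Lemma \ref{chernoff_small_s} is directly applicable and gives
\[
E[\exp(\skalar{s}{X_n})]\le \exp\!\left(\tfrac{5}{2}\gamma^2 D^2\|s\|^2\right),
\]
which is the first case verbatim. For $1/(\gamma D)<\|s\|\le L_0/D$, I invoke Lemma \ref{chernoff_large_s} with the specific choice $L:=L_0/D$; since $\|s\|\le L$, the hypothesis of the lemma is satisfied and, as $L\le L_0/D$ falls into the upper branch of the definition of $K_L$, we read off $K_L=24D^2$, yielding the second case.

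For the third range $\|s\|>L_0/D$, the idea is to apply Lemma \ref{chernoff_large_s} with $L:=\|s\|$ itself. Then $L>L_0/D$, so we land in the second branch of $K_L$, giving $K_L=4 L^{-2}e^{LD}=4\|s\|^{-2}e^{D\|s\|}$. Substituting into the bound of Lemma \ref{chernoff_large_s} the factor $\|s\|^2$ cancels:
\[
E[\exp(\skalar{s}{X_n})]\le \exp\!\left(K_L\|s\|^2\right)=\exp\!\left(4e^{D\|s\|}\right),
\]
which is exactly the third case.

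There is essentially no obstacle; the only thing to be careful about is that Lemma \ref{chernoff_large_s} is formulated for $\|s\|$ ranging up to a parameter $L$, and that $K_L$ depends on $L$ and not on $\|s\|$. By letting $L$ depend on which regime $\|s\|$ lies in (fixing $L=L_0/D$ for the middle regime, and letting $L=\|s\|$ slide for the outer regime), the two cases in the definition of $K_L$ translate into the two outer cases of the corollary. Putting the three cases together yields the claim for every $s\in\R^k$ and $n\ge 1$.
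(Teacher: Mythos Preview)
Your proof is correct and follows essentially the same route as the paper: both invoke Lemma \ref{chernoff_small_s} for $\|s\|\le 1/(\gamma D)$ and Lemma \ref{chernoff_large_s} for the remaining two ranges. The only cosmetic difference is in the third case: the paper remarks that $x\mapsto e^{Dx}/x^2$ is increasing on $[L_0/D,\infty)$ before extracting the bound, whereas you simply set $L=\|s\|$ directly, which makes that monotonicity observation unnecessary.
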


\begin{proof}{}
The bounds for $\|s\|\le L_0/D$ follow immediately from Lemma \ref{chernoff_small_s} and Lemma \ref{chernoff_large_s}.
Since the function $x\mapsto e^{Dx}/x^2$ is monotonically increasing on the interval $[L_0/D,\infty)$, Lemma \ref{chernoff_large_s} yields also the bound in the case $\|s\|> L_0/D$.
\end{proof}

Now, we get the tail bound for any entry of the vector $X_n$.

\begin{proof}{ of Theorem \ref{upper_tail_bound}}
We denote by $e_j$ the vector with $1$ in the $j$-th entry and $0$ elsewhere.
We use Chernoff's bounding technique and obtain for $u>0$ and $j\in\{\num{1}{k}\}$ with Corollary \ref{kor_tail}
\begin{align*}
P\left(X_{n,j}>t\right){}={}&P\left(\exp\left(uX_{n,j}\right)>\exp(ut)\right)\\
{}\le{}&E\left[\exp\left(uX_{n,j}-ut\right)\right]\\
{}={}&E\left[\exp\left(u\skalar{e_j}{X_n}-ut\right)\right]\\
{}\le{}&\exp\left(K_uu^2-ut\right),
\end{align*}
where
\[
K_u=\begin{cases}\frac 52\gamma^2D^2,&\text{ for $0\le u\le 1/(\gamma D)$},\\24D^2,&\text{ for $1/(\gamma D)<u\le L_0/D$},\\4\frac{e^{Du}}{u^2},&\text{ for $L_0/D<u$}.\end{cases}
\]
For the left tail we receive analogously
\begin{align*}
P\left(X_{n,j}<-t\right){}={}&P\left(\exp\left(uX_{n,j}\right)<\exp(-ut)\right)\\
{}\le{}&E\left[\exp\left(-uX_{n,j}-ut\right)\right]\\
{}={}&E\left[\exp\left(-u\skalar{e_j}{X_n}-ut\right)\right]\\
{}\le{}&\exp\left(K_uu^2-ut\right).
\end{align*}
In order to minimize this bound we are looking for the minimum of the function $f(u):=K_uu^2-ut$.
This function takes its minimum at $\check u_i(t)$ and has the value $f(\check u_i(t))$ for
\begin{align*}
\check u_1(t){}={}&\frac t{5D^2\gamma^2},&f(\check u_1(t))&{}={}-\frac{t^2}{10\gamma ^2D^2}&&\text{ for }K_u=\frac 52D^2\gamma^2,\\
\check u_2(t){}={}&\frac t{48D^2},&f(\check u_2(t))&{}={}-\frac{t^2}{96 D^2}&&\text{ for }K_u=24D^2,\\
\check u_3(t){}={}&\frac 1D\log\frac t{4D},&f(\check u_3(t))&{}={}\frac tD-\frac tD\log\frac t{4D}&&\text{ for }K_u=4\frac{e^{Du}}{u^2}
\end{align*}
where $\check u_i(t)\in U_i$ with $U_1:=[0,1/(\gamma D)]$, $U_2:=(1/(\gamma D),L_0/D]$ and $U_3:=(L_0/D,\infty)$.

If $\check u_i(t)\not\in U_i$ for a given $t$, we can take $u$ at the proper boundary of $U_i$.

Comparing the different values of the minimum for $i=1,2,3$ we obtain the total minimum.
For $t\in[0,5\gamma D]$ we have the following possibilities:
\begin{align*}
\check u_1(t){}={}&\frac t{5D^2\gamma^2},&f(\check u_1(t))&{}={}-\frac{t^2}{10\gamma ^2D^2}\\
u_2{}={}&\frac 1{\gamma D},&f(u_2)&{}={}\frac{24}{\gamma^2}-\frac t{\gamma D}\\
u_3{}={}&\frac{L_0}{D},&f(u_3)&{}={}4e^{L_0}-\frac{L_0}{D}t.
\end{align*}
The minimum is given for $\check u_1(t)$.

Similarly, we obtain the minimum in the other cases by making the following choices:\\
for $t\in[5\gamma D,48D/\gamma +D\sqrt{48}\sqrt{48/\gamma^2-5})$
\begin{align*}
u_1{}&{}=\frac 1{\gamma D},&K_u{}&{}=\frac 52\gamma^2D^2,&f(u_1){}&=\frac 52-\frac t{\gamma D},
\end{align*}
for $t\in[48D/\gamma +D\sqrt{48}\sqrt{48/\gamma^2-5},48 DL_0)$
\begin{align*}
\check u_2(t){}&{}=\frac t{48D^2},&K_u{}&{}=24D^2,&f(\check u_2(t)){}&=-\frac{t^2}{96D^2},
\end{align*}
for $t\in[48 DL_0,4De^{L_0})$
\begin{align*}
u_2{}&{}=\frac {L_0}{D},&K_u{}&{}=24D^2,&f(u_2){}&=24L_0^2-\frac{L_0}{D}t,
\end{align*}
and for $t\in[4De^{L_0},\infty)$
\begin{align*}
\check u_3(t){}&{}=\frac 1D\log\frac{t}{4D},&K_{\check u_3(t)}{}&{}=\frac{4e^{D\check u_3(t)}}{{\check u_3(t)}^2},&f(\check u_3(t)){}&=\frac tD-\frac tD\log\frac{t}{4D}.
\end{align*}
\end{proof}

\section{Applications to random trees}\label{sec-3}
An example of a vector which satisfies the recursion formula \eqref{recursion_tail} is the vector consisting of the internal path length and the Wiener index of a random tree in which all subtrees are (conditioned upon their sizes) an independent copy of the whole tree.

The internal path length of a rooted tree is the sum of all node depths of the tree.
The depth of a node is given by the number of edges on the path from the node to the root.
Analogously, the Wiener index is the sum of the distances between all unordered pairs of nodes where the distance is given by the number of edges on the unique path between the two nodes.

\subsection{The random $b$-ary recursive trees with weighted edges}\label{sec-3.1}
In this section we consider the special case of a random $b$-ary recursive tree with weighted edges.

The random $b$-ary recursive tree is a rooted, ordered, labelled tree where the outdegree is bounded by $b$ and the labels along each path beginning at the root increase.
We define this tree model by the following recursive procedure.
We consider the infinite complete $b$-ary rooted, ordered tree and start with the root as the first internal node and its $b$ children as external nodes.
Given \emph{the random $b$-ary recursive tree} with $n$ internal nodes, the $n+1$st internal node is added in the following way.
We choose a random node uniformly distributed on the set of all current external nodes, change it to an internal one and add the $b$ children of this new node to the set of external nodes.
Finally, the nodes are labelled in the order of their appearance.

Let $Z:=(\vect{Z}{1}{b})\in\R_{\ge 0}^b$ be a random vector with non-negative entries and attach to every node $u$ of the complete infinite $b$-ary tree an independent copy $Z^{(u)}$ of $Z$.
We consider the entries of $Z^{(u)}$ as weights of the edges from $u$ to its $b$ children.
If all $Z^{(u)}$ are independent of $T_n$, we refer to $T_n$ supplied with the family $\{Z^{(u)}\}$ as a \emph{random $b$-ary recursive tree with edge weights $Z$}.

While the entries of the vector $Z$ may depend on each other, we assume that they are identically distributed, i.e.\ for all $i,j\in\{\num{1}{b}\}$ we have $Z_i\eqd Z_j$, and denote its expectation by $\mu:=E[Z_1]$.
This assumption is not restrictive for the intended limit theorems as can be seen by a permutation argument \citep[see][p.\ 14--15]{munsonius_10}.
For instance, the shape of the random binary search tree is equally distributed as the shape of the random $b$-ary recursive tree with egde weights $(Z_1,Z_2)=(1,1)$ for $b=2$.

Let $Y_n=(W_n,P_n)$ denote the vector consisting of the Wiener index and the internal path length of the random $b$-ary recursive tree of size $n$ with edge weights $Z$.
In \citet{munsonius_10b} it is shown that the vector
\eqn\label{rec_pkn}
X_{n}:=\begin{bmatrix} \frac 1{n^2}&0\\0&\frac 1n\end{bmatrix} \left(Y_{n}-E[Y_{n}]\right)
\nqe
satisfies the recursion formula \eqref{recursion_tail} where the matrices $A_i(I_n)$ are given by
\[
A_i(I_n)=\begin{bmatrix} \frac {I_{n,i}^2}{n^2}&\frac{I_{n,i}(n-I_{n,i})}{n^2}\\[1.2ex]0&\frac {I_{n,i}}{n}\end{bmatrix}
\]
and the vector $d(I_n,Z)$ is given by
\begin{equation}\label{toll-term-a}
d_1^{(n)}{}={}\frac b{b-1}\mu \sum_{i=1}^b\frac{I_{n,i}}{n}\log \frac{I_{n,i}}{n}+\sum_{i\not= j}\left(\frac 12(Z_i+Z_j)+\frac{b}{b-1}\mu \right)\frac{I_{n,i}}{n}\frac{I_{n,j}}{n}+o(1)
\end{equation}
and
\begin{equation}\label{toll-term-b}
d_2^{(n)}=\frac b{b-1}\mu\sum_{i=1}^b \frac{I_{n,i}}{n}\log \frac{I_{n,i}}{n}+\sum_{i=1}^b Z_i\frac{I_{n,i}}{n}+o(1).
\end{equation}
To apply the result of the previous section, we have to prove the stochastic domination condition for the $b$-ary recursive tree.

\subsubsection{Coupling}
For $(\vect{x}{1}{b})\in\R^b$ we denote by $(\vect{x}{(1)}{(b)})$ the order statistic, i.e.\
\[
x_{(1)}\ge x_{(2)}\ge\cdots\ge x_{(b)}
\]
and the entries of $(\vect{x}{1}{b})\in\R^b$ and $(\vect{x}{(1)}{(b)})$ are the same.
We consider the space $\R^b$ with the partial order given by
\[
(\vect{x}{1}{b})\le (\vect{y}{1}{b})\qquad:\Longleftrightarrow\qquad x_i\le y_i\quad\text{for all $i\in\{\num{1}{b}\}$}
\]
and define $E_b:=\{(\vect{x}{1}{b})\in\R^b_{\ge 0}\mid x_1\ge x_2\ge\ldots\ge x_b\}$.
Moreover, we denote by $PU(b)$ a P\'olya urn with balls of $b$ different colors $\{1,\ldots,b\}$, which contains at the beginning $1$ ball of each color and after a ball of color $j$ is drawn, it is returned to the urn together with another $b-1$ balls of the same color.

Considering the evolution process which yields the random $b$-ary recursive tree, it is not difficult to see, that the vector of the  sizes of the subtrees has the same distribution as the vector of the numbers of drawings of a ball of the different colors in the urn described above \citep[for more details see][Section 2.2]{munsonius_10}.
Using this, the next two lemmas provide the estimate we need.

\begin{lemma}\label{urn_coupling}\hspace{-0.6em}
For $j\in\{\num{1}{b}\}$ let $J_{n,j}$ denote the number of times that the drawn ball is of the color $j$ during the first $n$ drawings of the P\'olya urn $PU(b)$ and $I_{n,j}$ the corresponding size for the P\'olya urn $PU(b+1)$.
Then we have for the vectors $J_n:=(\vect{J}{n,1}{n,b})$ and $I_n:=(\vect{I}{n,1}{n,b+1})$
\[
(\vect{I}{n,(1)}{n,(b)})\preceq_{\text{st}}(\vect{J}{n,(1)}{n,(b)}).
\]
\end{lemma}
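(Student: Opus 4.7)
The plan is to establish the stochastic dominance by a coupling argument, exploiting the tree interpretation of the two Pólya urns. Recall that $J_n$ has the same joint distribution as the vector of root-subtree sizes of a random $b$-ary recursive tree at the stage when $n$ nodes beyond the root have been inserted, and likewise $I_n$ plays this role for the $(b+1)$-ary tree. The aim is to construct both trees on a common probability space so that, at every $n$, the top-$b$ order statistics of the $(b+1)$-ary root-subtree sizes are dominated (in the order-statistic/weak-majorization sense witnessed by the coupling) by those of the $b$-ary tree.

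Concretely, I would grow the $(b+1)$-ary tree $T^{(b+1)}$ step by step by choosing, at each step, a uniformly random external slot of $T^{(b+1)}$ to internalise, and in parallel maintain a coupled $b$-ary tree $T^{(b)}$ according to the following rule. If the new internal node in $T^{(b+1)}$ falls inside one of the top-$b$ root-subtrees (with respect to current size), the correspondingly indexed root-subtree of $T^{(b)}$ is incremented; if instead it falls inside the $(b+1)$-st (smallest) root-subtree of $T^{(b+1)}$, then $T^{(b)}$ is advanced by an independent uniform choice among its own external slots. The intuitive content is that the excess mass carried by the $(b+1)$-st coordinate of $I_n$ absorbs the additional spread that would otherwise break the comparison, leaving the top-$b$ coordinates of $I_n$ dominated by $J_n$.

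The main obstacle is twofold. First, one must verify that the coupled $T^{(b)}$ indeed has the marginal distribution of a bona fide random $b$-ary recursive tree, despite its update rule being driven in part by the external-slot choices of $T^{(b+1)}$; this amounts to checking that at each step the slot chosen in $T^{(b)}$ is conditionally uniform over its current external slots, using exchangeability of the Pólya urn dynamics. Second, one must check that the desired inequality on the top-$b$ order statistics is preserved under single-step updates, including the case in which an update in $T^{(b+1)}$ causes a reordering of its top-$b$ subtrees (so that the labels assigned to the largest coordinates shift). Both points should yield to an induction on $n$, with the trivial base case $n=1$; the inductive step requires a careful combinatorial bookkeeping that matches increments on corresponding top-$b$ coordinates and handles the 'overflow' through the discarded $(b+1)$-st coordinate.
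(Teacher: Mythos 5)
Your overall strategy --- couple the two urns step by step and argue inductively that the domination of the sorted top-$b$ sizes is preserved --- is in the same spirit as the paper's proof, which works with the sorted-size Markov chains and invokes a coupling theorem for stochastically ordered chains (Lindvall). However, the \emph{explicit} coupling rule you propose has a fatal flaw that you flag as ``the main obstacle'' but do not resolve, and it cannot be resolved in the form stated: the marginal law of the coupled $T^{(b)}$ is \emph{not} that of a $b$-ary recursive tree.

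To see why, let $y=(y_1,\dots,y_{b+1})$ be the current (sorted) subtree sizes in $T^{(b+1)}$ and $x=(x_1,\dots,x_b)$ those of $T^{(b)}$, both after $n$ steps. Under your rule, the probability that the $j$-th coordinate of $T^{(b)}$ is incremented at the next step is
\[
\frac{1+y_j b}{b+1+nb}\;+\;\frac{1+y_{b+1}b}{b+1+nb}\cdot\frac{1+x_j(b-1)}{b+n(b-1)},
\]
whereas the correct $PU(b)$ transition probability is $\frac{1+x_j(b-1)}{b+n(b-1)}$. These do not agree in general; for instance, after the first step one can have $y=(1,0,\dots,0,0)$ and $x=(1,0,\dots,0)$ (so $y_{b+1}=0$), and then for $j=1$ the first expression is strictly larger than the second. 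So the ``bookkeeping'' you defer to the inductive step cannot be made to close: copying the increment directly whenever the $(b+1)$-ary tree hits one of its top-$b$ subtrees skews the $b$-ary law. The intuitive picture that the $(b+1)$-st coordinate ``absorbs the extra spread'' is also not the operative mechanism; the domination actually comes from the fact that, for a subtree of a given size $s$, the $b$-ary process increments it \emph{more} readily than the $(b+1)$-ary one because there are fewer competing ball colours.

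The paper proceeds differently and avoids specifying the coupling explicitly. It writes down the transition kernels $K_n$ and $K'_n$ of the Markov chains $J'_n:=(J_{n,(1)},\dots,J_{n,(b)})$ and $I'_n:=(I_{n,(1)},\dots,I_{n,(b)})$ on $E_b$, and shows that for all $y\le x$ in $E_b$ one has $K'_n(y,\cdot)\preceq_{\mathrm{st}}K_n(x,\cdot)$. The key analytic input is the pointwise inequality
\[
\frac{1+x_j(b-1)}{b+n(b-1)}\;\ge\;\frac{1+x_jb}{b+1+nb}\qquad\text{for }x_j\le n,
\]
together with the observation that if $y_j<x_j$ then the $b$-ary chain can simply stay put (since $y+e_j\le x$), so only coordinates with $y_j=x_j$ need a mass comparison. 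Given the kernel domination, a theorem of Lindvall on stochastic monotonicity of Markov chains then yields the existence of a coupling with $\tilde I_n\le\tilde J_n$ almost surely for all $n$; this coupling is constructed abstractly from the kernel comparison and need not have the one-line description you propose. If you want to salvage an explicit step-wise coupling, you must at each step couple the two one-step transition laws (given the current states) so that the domination is preserved, which again reduces to establishing the kernel inequality above rather than a simple ``copy the increment or advance independently'' rule.
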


We prove this lemma by using a result about stochastic domination between Markov chains \citep[see][Section IV.5, Theorem (5.8)]{lindvall_92}.
With $e_j\in\R^b$ we denote the vector where all entries are $0$ except the $j$-th entry which is $1$.
\begin{proof}{}
It suffices to show that there is a coupling of $I'_n:=(\vect{I}{n,(1)}{n,(b)})$ and $J'_n:=(\vect{J}{n,(1)}{n,(b)})$ such that $I'_n\le J'_n$ almost surely.

The sequence $J'_n$ resp.\ $I'_n$ is a Markov chain.
To write down the transition probabilities we define $\alpha_j:E_b\to \N_0$ by
\[
\alpha_j(\vect{x}{1}{b}):=\begin{cases}|\{i\mid x_j=x_i\}|,&\text{if $x_{j-1}>x_j$,}\\0,&\text{otherwise}.\end{cases}
\]
Thus, the transition probability for $J'_n$ is given by the kernel $K_n:E_b\times E_b\to[0,1]$ with
\[
K_n(x,x+e_j):=P(J'_{n+1}=x+e_j\mid J'_n=x)=\frac{1+x_j(b-1)}{b+n(b-1)}\alpha_j(x)
\]
for $x=(\vect{x}{1}{b})\in E_b$ and $j=\num{1}{b}$.
For the transition probability of $I'_n$ we get the kernel $K'_n:E_b\times E_b\to[0,1]$ with
\[
K'_n(x,x+e_j):=P(I'_{n+1}=x+e_j\mid I'_n=x)=\frac{1+x_jb}{b+1+nb}\alpha_j(x)
\]
for $j=\num{1}{b}$ and 
\[
K'_n(x,x):=P(I'_{n+1}=x\mid I'_n=x)=\frac{1+\left(n-\sum_{i=1}^bx_i\right)b}{b+1+nb}.
\]

Let $x,y\in E_b$ with $y\le x$. We claim that $K'_n(y,\cdot)$ is stochastically dominated by $K_n(x,\cdot)$.

If
\[
P(I'_{n+1}=y+e_j\mid I'_n=y)>0
\]
we have $\alpha_j(y)\not=0$.
For $y_j<x_j$ we get $y+e_j\le x$.
Thus, we only have to consider the case where $\alpha_j(y)\not=0$ and $y_j=x_j$.
Let $\vect{j}{1}{m}$ be the components for which $\alpha_{j_l}(y)\not=0$ and $x_{j_l}=y_{j_l}$ for $1\le l\le m$.
Then we have $\alpha_{j_l}(x)\ge\alpha_{j_l}(y)$ because $x_{j_l-1}\ge y_{j_l-1}> y_{j_l}=x_{j_l}$.
Since $x_{j_l}\le n$ we get
\[
\frac{1+x_{j_l}(b-1)}{b+n(b-1)}\ge\frac{1+x_{j_l}b}{b+1+nb}.
\]
This yields for all $l\in\{\num{1}{m}\}$
\[
K_n(x,x+e_{j_l})=\frac{1+x_{j_l}(b-1)}{b+n(b-1)}\alpha_{j_l}(x)\ge \frac{1+x_{j_l}b}{b+1+nb}\alpha_{j_l}(y)=K'_n(y,y+e_{j_l}).
\]
For $i\in\{\num{1}{b}\}\setminus\{\vect{j}{1}{m}\}$ we obviously have
\[
K_n(x,x+e_i)\ge 0= K_n'(y,y+e_i).
\]
Hence, we can find a coupling $(\tilde J_{n+1},\tilde I_{n+1})$ of $(J'_{n+1},I'_{n+1})$ with
\[
P\big(\tilde I_{n+1}\le \tilde J_{n+1}\mid(\tilde J_n,\tilde I_n)=(x,y)\big)=1
\]
for all $x,y\in E_b$ with $y\le x$. 
This implies that $K_n(x,\cdot)$ dominates stochastically $K'_n(y,\cdot)$.
Because of the Markov property we conclude with \citet[Section IV.5, Theorem (5.8)]{lindvall_92} that there exists a coupling $(\tilde J,\tilde I)$ of $J'$ and $I'$, such that $\tilde I_n\le\tilde J_n$ almost surely for all $n\in\N$.
\end{proof}

\begin{lemma}\label{abschaetzung_ana}
Let $f:[0,1]\to\R$ be the function given by 
\[
f(x)=x^4+(x^2-x^3)\left(1+\sqrt{x^2+1}\right).
\]
Then, for $(\vect{x}{1}{b})\in E_b$ and $(\vect{y}{1}{b+1})\in E_{b+1}$ with 
$
\sum_{i=1}^bx_i=\sum_{j=1}^{b+1}y_j=1
$
and $(\vect{y}{1}{b})\le (\vect{x}{1}{b})$ we have
\[
\sum_{i=1}^{b+1}f(y_i)\le\sum_{i=1}^bf(x_i).
\]
\end{lemma}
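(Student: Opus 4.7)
The natural approach is through majorization. Pad the $b$-tuple $(x_1,\ldots,x_b)$ with a zero to form $\tilde x := (x_1,\ldots,x_b,0) \in E_{b+1}$. Then $\tilde x$ and $y := (y_1,\ldots,y_{b+1})$ both lie in $E_{b+1}$ and sum to $1$. The coordinatewise hypothesis $y_i \le x_i$ for $i = 1,\ldots,b$ immediately yields the partial-sum inequalities $\sum_{i=1}^k y_i \le \sum_{i=1}^k x_i$ for every $k \le b$, while for $k = b+1$ both partial sums equal $1$. This is precisely the majorization relation $\tilde x \succeq y$.

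Because $f(0) = 0$, the conclusion $\sum_{i=1}^{b+1} f(y_i) \le \sum_{i=1}^b f(x_i)$ is equivalent to $\sum_{j=1}^{b+1} f(y_j) \le \sum_{i=1}^b f(x_i) + f(0)$. Once I know that $f$ is convex on $[0,1]$, the symmetric function $(z_1,\ldots,z_{b+1}) \mapsto \sum_j f(z_j)$ is Schur-convex, and Schur-convexity together with $\tilde x \succeq y$ delivers the inequality. Operationally, one passes from $y$ to $\tilde x$ by a finite sequence of T-transforms that move mass off the smallest coordinate $y_{b+1}$ and onto larger coordinates; each such transfer increases $\sum_j f(z_j)$ by convexity of $f$.

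The whole proof therefore reduces to establishing convexity of $f$ on $[0,1]$, which is the main obstacle. Differentiating $f(x) = x^4 + (x^2-x^3)(1+\sqrt{x^2+1})$ twice and clearing denominators one obtains an identity of the form
\[
(x^2+1)^{3/2}\, f''(x) \;=\; p(x)\sqrt{x^2+1} + r(x),
\]
with $p(x) = 12x^4 - 6x^3 + 14x^2 - 6x + 2$ and $r(x) = -12x^5 + 6x^4 - 19x^3 + 9x^2 - 6x + 2$. One checks that $p$ is positive on $[0,1]$ (its minimum is bounded below by the simple estimate $2 - 6x + 14x^2 > 0$). Where $r(x) \ge 0$ the nonnegativity of $f''$ is immediate; where $r(x) < 0$, rewriting the required inequality as $p(x)\sqrt{x^2+1} \ge -r(x) > 0$ and squaring reduces it to the polynomial inequality $p(x)^2(x^2+1) \ge r(x)^2$ of degree $10$, which is verified by direct polynomial arithmetic. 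This verification is routine but the only genuinely delicate step; once it is in place, the majorization argument closes the proof at once.
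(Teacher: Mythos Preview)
Your argument is correct and follows the same overall architecture as the paper: both reduce the lemma to the convexity of $f$ on $[0,1]$, and then exploit convexity to pass from the $(b{+}1)$-tuple $y$ to the padded $b$-tuple $(x_1,\ldots,x_b,0)$. Where you invoke majorization and Schur-convexity, the paper writes $x_i=y_i+\alpha_i y_{b+1}$ with $\alpha_i\ge 0$, $\sum_i\alpha_i=1$, and uses the mean value theorem together with the monotonicity of $f'$ to obtain $f(x_i)-f(y_i)\ge \alpha_i f(y_{b+1})$, which after summing gives the claim. This is precisely the elementary, hands-on version of your Schur-convexity step; your formulation is cleaner and immediately generalises, while the paper's avoids any appeal to majorization theory.

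For the convexity of $f$, the paper takes a different and somewhat slicker route than your polynomial verification. From the same formula for $f''$ it replaces $12x^2+\frac{2(2x^2-3x^3)}{\sqrt{x^2+1}}$ by the smaller quantity $10x^2+\frac{2(3x^2-3x^3)}{\sqrt{x^2+1}}$ (the difference is $2x^2(1-1/\sqrt{x^2+1})\ge 0$), discards the two last terms as nonnegative on $[0,1]$, and reduces to showing $g(x):=10x^2+(2-6x)(1+\sqrt{x^2+1})\ge 0$; here one finds the exact minimum $g(3/4)=0$. Your identity $(x^2+1)^{3/2}f''(x)=p(x)\sqrt{x^2+1}+r(x)$ with the stated $p,r$ is correct, and the squaring strategy works. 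One small slip: your justification ``$p$ is bounded below by $2-6x+14x^2$'' fails on $(0,\tfrac12)$ because $12x^4-6x^3<0$ there. The conclusion $p>0$ is still true, e.g.\ via $p(x)=x^2(12x^2-6x+1)+(13x^2-6x+2)$, both summands having negative discriminant; you should patch this line.
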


\begin{proof}{}
We first show that the function $f$ is convex.
To do this, we derive the second derivative which is given by
\begin{align*}
f''(x){}={}&12x^2+(2-6x)(1+\sqrt{x^2+1})+\frac {2(2x^2-3x^3)}{\sqrt{x^2+1}}+\frac {x^2-x^3}{(x^2+1)^{\frac 32}}\\
\ge{}&10x^2+(2-6x)(1+\sqrt{x^2+1})+\frac {2(3x^2-3x^3)}{\sqrt{x^2+1}}+\frac {x^2-x^3}{(x^2+1)^{\frac 32}}.
\end{align*}
To show convexity it suffices to show $f''\ge 0$.
Since for $x\in[0,1]$ it is $x^2\ge x^3$ it remains to show
\[
g(x):=10x^2+(2-6x)(1+\sqrt{x^2+1})\ge 0.
\]
By consideration of the first and second derivatives we see that the minimum of $g$ is obtained by $x=3/4$ with $g(3/4)=0$. 
Taking everything into account, we obtain $f''(x)\ge 0$ for all $x\in[0,1]$ which implies that the first derivative $f'$ is monotone increasing.

By assumption, there exist numbers $\vect{\alpha}{1}{b}\ge 0$ with $x_i=y_i+\alpha_iy_{b+1}$ for $i=\num{1}{b}$ and $\sum_{i=1}^b\alpha_i=1$.
The monotonicity of $f'$ and $f(0)=0$ imply with the mean value theorem
\[
\frac{f(x_i)-f(y_i)}{\alpha_iy_{b+1}}=f'(\xi)\ge f'(\eta)=\frac{f(y_{b+1})}{y_{b+1}}
\]
for some $\xi\in[y_i,x_i]$ and $\eta\in[0,y_{b+1}]\subset[0,y_i]$.
This finally yields
\[
\sum_{i=1}^bf(x_i)\ge\sum_{i=1}^{b}(f(y_i)+\alpha_if(y_{b+1}))=\sum_{i=1}^{b+1}f(y_i).
\]
\end{proof}

\begin{proof}{ of Theorem \ref{upper_tail_bound_wp}}
As seen in equation \eqref{rec_pkn} we have for the vector $X_n$ the recursion formula \eqref{recursion_tail} where 
\[
A_i(I_n)=\begin{bmatrix} \frac {I_{n,i}^2}{n^2}&\frac{I_{n,i}(n-I_{n,i})}{n^2}\\[1.2ex]0&\frac {I_{n,i}}{n}\end{bmatrix}.
\]
For the operator norm we obtain
\[
\|A_i(I_n)\|_\mathrm{op}^2=\|A_i^T(I_n)A_i(I_n)\|_\mathrm{op}.
\]
The matrix $A_i^T(I_{n})A_i(I_n)$is symmetric.
Thus, its operator norm is given by the largest absolute eigenvalue.
Solving the characteristic equation for the matrix we obtain that its eigenvalue being larger in absolute value is given by
\[
\frac{I_{n,i}^2}{n^2}\left(1-\frac{I_{n,i}}{n}+\frac{I_{n,i}^2}{n^2}+\left(1-\frac{I_{n,i}}{n}\right)\sqrt{\frac{I_{n,i}^2}{n^2}+1}\right)=f\left(\frac{I_{n,i}}{n}\right)
\]
with the function $f$ as given in Lemma \ref{abschaetzung_ana}.
This yields with Lemma \ref{abschaetzung_ana} and Lemma \ref{urn_coupling}
\[
\sum_{i=1}^b\|A_i(I_n)\|_{\mathrm{op}}^2\preceq_{\mathrm{st}}\|A_1(J_{n})\|_\mathrm{op}^2+\|A_2(J_{n})\|_\mathrm{op}^2
\]
where $J_n=(J_{n,1},J_{n,2})$ is the vector of the sizes of the subtrees of a random binary search tree, i.e.\ $J_{n,1}$ is uniformly distributed on $\{\num{0}{n-1}\}$ and $J_{n,2}=n-1-J_{n,1}$.
By Lemma 2.2 from \citet{alikhan_neininger_07} we get the stochastic domination condition
\[
\sum_{i=1}^b\|A_i(I_n)\|_{\mathrm{op}}^2\preceq_{\mathrm{st}}1-U(1-U).
\]

Considering the toll vector $d(I_n,Z)$ in \eqref{toll-term-a} and \eqref{toll-term-b}, the boundedness of 
$\|Z\|$ implies that its norm is bounded almost surely by some constant $D$.
Furthermore, we trivially have $X_1=0$ since the tree with one node is only the root.
The claim follows by Theorem \ref{upper_tail_bound}.
\end{proof}
In \citet{munsonius_10b} the asymptotic expansion of the expectation of $P_{n}$ and $W_{n}$ is determined.

Using these results, we obtain asymptotic tail bounds.
\begin{proof}{ of Corollary \ref{korollar_asymptotischer_tail}}
With $y_n=\frac{t\E[P_{n}]}{n}=\frac{b}{b-1}\mu t\log n+O(1)$ we obtain by Theorem \ref{upper_tail_bound_wp} because of $\lim_{n\to\infty}y_n=\infty$,
\begin{align*}
\lefteqn{P(|P_n-E[P_n]|\ge tE[P_{n}])}\\
{}={}&P\left(\frac{|P_{n}-E[P_{n}]|}{n}\ge y_n\right)\\
{}\le{}&\exp\left(\left(\frac b{b-1} \frac\mu D t\log n+O(1)\right)\left(1-\log\frac{tb\mu\log n}{4D(b-1)}\right)\right)\\
=&\exp\left(-\frac b{b-1}\frac\mu D t\log n\left(\log^{(2)}n+\log t+\alpha+o(1)\right)\right).
\end{align*}
With $z_n=\frac{t\E[W_{n}]}{n^2}=y_n+O(1)$ the claim for $W_{n}$ follows as well.
\end{proof}

\subsection{Random linear recursive trees}\label{sec-3.2}
In this section we transfer the results for the random $b$-ary recursive tree with weighted edges to linear recursive trees.
In this tree, every node $u$ has a weight $w_u$.
Starting with the root, the tree grows node by node.
In each step the new node is attached to a randomly chosen node of the previous ones.
The probability that node $u$ is chosen is proportional to the weight $w_u$ of the node.
In the case of linear recursive trees the weight is given by $w_u=1+\beta\deg(u)$ where $\deg(u)$ is the number of children of $u$ and $\beta\in\R_{\ge 0}$ is the parameter of the tree.

Given a random linear recursive tree $T_n$ of size $n$ with weight function $u\mapsto 1+(b-2)\deg(u)$ we consider a $b$-ary recursive tree $\tilde T_{n-1}$ of size $n-1$ where the edges are weighted by the random vector $Z$ which is obtained by a uniformly distributed permutation of the entries of $(1,0,\ldots,0)\in\R^b$.
In particular, we have $\mu=1/b$.
Denote by $\tilde P_{n-1}$ and $\tilde W_{n-1}$ (resp. $P_{n}$ and $W_{n}$) the internal path length and the Wiener index of $\tilde T_{n-1}$ (resp. $T_n$).

\begin{proof}{ of Corollary \ref{pk_linear}}
With the notation above, it is shown in \citet{munsonius_10b} that 
\[
P_{n}\eqd\tilde P_{n-1}+n-1
\]
holds.
Therefore, we have
\[
\frac{P_{n}-E[P_{n}]}{n}\eqd\frac{\tilde P_{n-1}-E[\tilde P_{n-1}]}{n}.
\]
The claim follows immediately by Theorem \ref{upper_tail_bound_wp} and Corollary \ref{korollar_asymptotischer_tail}.
\end{proof}

\begin{proof}{ of Corollary \ref{wk_linear}}
With the notation above, it is shown in \citet{munsonius_10b} that 
\[
W_{n}\eqd\tilde W_{n-1}-\tilde P_{n-1}+(n-1)^2
\]
holds.
This yields
\begin{align*}
\lefteqn{P(|W_{n}-E[W_{n}]|>tE[W_{n}])}\\
&={}P(|\tilde W_{n-1}- E[\tilde W_{n-1}]-\tilde P_{n-1}+E[\tilde P_{n-1}]|>tE[W_{n-1}]).
\end{align*}
Moreover, in \citet{munsonius_10b} it is shown that $\Var(\tilde P_{n-1})=\Theta(n^{2})$ for $n\to\infty$.
Applying Chebycheff's inequality, we obtain for any $\epsi>0$
\[
P\left(\frac{|\tilde P_{k-1,n}-E[\tilde P_{n-1}]|}{n^2}>\epsi\right)\le O\left(\frac 1{n^2}\right).
\]
Since $E[\tilde W_{n-1}]=E[W_{n}]+O(n\log n)=\Theta(n^2\log n)$ this yields with Corollary \ref{korollar_asymptotischer_tail}
\begin{align*}
\lefteqn{P(|W_{n}-E[W_{n}]|>tE[W_{n}])}\qquad\\
{}\le{}&{}P\left(\frac{|\tilde W_{n-1}-E[\tilde W_{n-1}]|}{n^2}+\epsi>\frac{tE[W_{n}]}{n^2}\right)+O\left(\frac 1{n^2}\right)\\
\le{}&{}\exp\left(-\frac 1{b-1}\frac 1D t\log n\left(\log^{(2)}n+\log t+\alpha+o(1)\right)\right).\qed
\end{align*}
\renewcommand{\qed}{}
\end{proof}

\begin{bem}\label{bem_port}
Random plane oriented recursive trees without the order of the nodes equal in distribution the random linear recursive tree with parameter $\beta=1$.
Since the internal path length as well as the Wiener index are invariant under changing the order of the tree the tail bounds in Corollary \ref{pk_linear} and Corollary \ref{wk_linear} with $b=3$ provides in particular the corresponding tail bounds for the plane oriented recursive tree.
\end{bem}

\section{Lower tail bounds for the Wiener index}\label{sec-4}
For the number of comparisons made by quicksort a lower bound for the tail is proved in \citet{mcdiarmid_hayward_96}.
There, a set of binary trees is constructed that has high probability and implies a large number of comparisons.
They succeeded in finding lower and upper bounds which have the same asymptotical behavior.
This idea is employed by \citet{alikhan_neininger_07} to prove a lower tail bound for the Wiener index of binary search trees.

In \citet[Section 7.2]{munsonius_10} the construction from \citet{alikhan_neininger_07} is extended to random $b$-ary recursive trees with weighted edges where at least one entry of $Z$ is $1$.
This yields the following lower bound on the tail of the distribution of the Wiener index.
\begin{satz}\label{lower_tail_w}
Let $W_n$ denote the Wiener index of a random $b$-ary recursive tree of size $n$ with edge weights $Z$ where $\{\vect{Z}{1}{b}\}\cap\{1\}\not=\emptyset$ and $Z_i\ge 0$.
Then we have for fixed $t>0$ and $n\to\infty$
\begin{align*}
\lefteqn{P\left(|W_n-E[W_n]|>tE[W_n]\right)}\qquad\\
&{}\ge \,\exp\left(-4\frac b{b-1}\mu  t\log n\left(\log^{(2)}n+O(\log^{(3)}n)\right)\right).
\end{align*}
\end{satz}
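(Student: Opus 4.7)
The plan is to follow the explicit-construction strategy of \citet{mcdiarmid_hayward_96}, adapted to the Wiener index of the random binary search tree by \citet{alikhan_neininger_07} and extended to the $b$-ary weighted setting in \citet[Section 7.2]{munsonius_10}. One exhibits an event $A_n$ on which the tree structure forces $W_n > (1+t)E[W_n]$ deterministically, and lower-bounds $P(A_n)$ directly; this immediately gives $P(|W_n-E[W_n]|>tE[W_n])\ge P(A_n)$.

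Construction of $A_n$: by the hypothesis that $\{Z_1,\ldots,Z_b\}\cap\{1\}\ne\emptyset$, fix a coordinate $i^*\in\{1,\ldots,b\}$ for which $p_0 := P(Z_{i^*}=1)>0$, and set $k := \lceil c\,t\log n\rceil$ for a constant $c>0$ to be chosen. Let $A_n$ be the event that during the first $k-1$ insertion steps the new internal node is always placed at the $i^*$-th external slot of the most recently added internal node (creating a descending ``spine'' of $k$ internal nodes) and that at each of these $k$ nodes the $i^*$-th coordinate of the weight vector $Z^{(u)}$ equals $1$. On $A_n$ the tree carries a rigid spine of depth $k-1$ all of whose edges have weight exactly $1$.

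Wiener and probability bounds on $A_n$: a counting argument in the spirit of \citet[Section 7.2]{munsonius_10} shows that a constant fraction of the $\binom{n-k}{2}$ pairs of non-spine nodes have their lowest common ancestor on the spine and so pick up an extra distance of order $k$ along it, giving a Wiener-index excess on $A_n$ of order $kn^2$; choosing $c\ge 4b\mu/(b-1)$ therefore ensures $W_n > (1+t)E[W_n]$ on $A_n$ for all $n$ large enough. For the probability, at growth step $j$ there are $1+(j-1)(b-1)$ external nodes, so each prescribed insertion choice has probability $1/(1+(j-1)(b-1))$, and, independently, each prescribed weight event has probability $p_0$; hence
$$
P(A_n)\ge p_0^{\,k}\prod_{j=1}^{k-1}\frac{1}{1+j(b-1)},
$$
and Stirling gives $\log P(A_n)\ge -k\log k - k\log(b-1) + O(k)$. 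Substituting $k = ct\log n$ with $c = 4b\mu/(b-1)$ yields exactly the stated exponential.

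The main obstacle is the deterministic Wiener-index counting with a sharp leading constant. The probability bookkeeping is routine; what takes work is showing that the excess $W_n - E[W_n]$ on $A_n$ matches $tE[W_n]$ with the correct coefficient $4b\mu/(b-1)$, since one must analyse how the $n-k$ non-spine nodes distribute over the $k(b-1)+1$ external slots available after the spine is built (their distribution is driven by the unconstrained continuation of the growth process) and how the forced weight-$1$ spine edges interact with the random weights on the rest of the tree. Identifying the order $k = \Theta(t\log n)$ is automatic from balancing $kn^2$ against $tE[W_n] = \Theta(tn^2\log n)$, but pinning down the precise constant $4b\mu/(b-1)$ requires the detailed combinatorial argument carried out in \citet[Section 7.2]{munsonius_10}.
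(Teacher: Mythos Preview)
Your sketch follows the route the paper indicates: both the paper and you defer the detailed construction to \citet[Section~7.2]{munsonius_10}, and your outline of the McDiarmid--Hayward/Ali~Khan--Neininger spine idea is the right one.

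There is one genuine slip in the write-up, though. The event $A_n$ you define constrains only the first $k$ insertions and the weights along the spine; conditional on $A_n$ the placement of the remaining $n-k$ nodes, and hence $W_n$, is still random. So the sentence ``on which the tree structure forces $W_n>(1+t)E[W_n]$ deterministically'' is false as stated: if the non-spine nodes happen to cluster near the root end of the spine (a positive-probability event given $A_n$), the Wiener excess is small. What the argument actually needs is either to enlarge $A_n$ by also prescribing a well-spread placement of the later nodes along the spine, or to show that $P\bigl(W_n>(1+t)E[W_n]\,\big|\,A_n\bigr)\ge 1-o(1)$ via the P\'olya-urn concentration of the subtree sizes hanging off the spine. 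You clearly recognise this, since you name exactly this distributional analysis as ``the main obstacle'' in your last paragraph; but that paragraph is inconsistent with the deterministic claim in your first paragraph. Once you replace ``deterministically'' by the appropriate conditional-probability statement and absorb the resulting $1-o(1)$ factor into the $O(\log^{(3)}n)$ slack, the sketch agrees with the intended proof.
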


With the transfer results already used in section \ref{sec-3.2} we obtain a lower bound for the distribution of the Wiener index of random linear recursive trees.
\begin{satz}
Let $W_n$ denote the Wiener index of a random linear recursive tree of size $n$ with weight function $u\mapsto 1+(b-2)\deg(u)$ for $b\in\N$ and $b\ge 2$.
Then we have for fixed $t>0$ and $n\to\infty$
\begin{align*}
\lefteqn{P\left(|W_{n+1}-E[W_{n+1}]|>tE[W_{n+1}]\right)}\qquad\\
&{}\ge\,\exp\left(-4\frac 1{b-1} t\log n\left(\log^{(2)}n+O(\log^{(3)}n)\right)\right).
\end{align*}
\end{satz}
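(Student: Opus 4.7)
{ of the lower tail bound for linear recursive trees}
The plan is to transfer the lower tail bound of Theorem \ref{lower_tail_w} for the $b$-ary recursive tree to the linear recursive tree via the distributional identity used in section \ref{sec-3.2}, absorbing the additive $\tilde P_{n-1}$ into a negligible error by Chebycheff's inequality.

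First, I would invoke the identity from \citet{munsonius_10b} (already recalled in the proof of Corollary \ref{wk_linear})
\[
W_{n+1}\eqd\tilde W_{n}-\tilde P_{n}+n^2,
\]
where $\tilde W_n$ and $\tilde P_n$ are the Wiener index and internal path length of the $b$-ary recursive tree with edge weights $Z$ obtained as a uniformly random permutation of $(1,0,\ldots,0)$. This tree satisfies the hypothesis $\{Z_1,\ldots,Z_b\}\cap\{1\}\ne\emptyset$ of Theorem \ref{lower_tail_w}, and its edge weight expectation is $\mu=1/b$. Taking expectations in the identity gives $E[W_{n+1}]=E[\tilde W_n]-E[\tilde P_n]+n^2$, so that centring yields
\[
W_{n+1}-E[W_{n+1}]\eqd(\tilde W_n-E[\tilde W_n])-(\tilde P_n-E[\tilde P_n]).
\]

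Next, I would fix a small $\epsi>0$ and split the target event by the triangle inequality:
\[
\big\{|\tilde W_n-E[\tilde W_n]|>(t+\epsi)E[W_{n+1}]\big\}\cap\big\{|\tilde P_n-E[\tilde P_n]|\le\epsi E[W_{n+1}]\big\}\subseteq\big\{|W_{n+1}-E[W_{n+1}]|>tE[W_{n+1}]\big\}.
\]
For the second event, I would use $\Var(\tilde P_n)=\Theta(n^2)$ (again from \citet{munsonius_10b} and already used in the proof of Corollary \ref{wk_linear}) together with $E[W_{n+1}]=\Theta(n^2\log n)$ and Chebycheff's inequality to obtain
\[
P\big(|\tilde P_n-E[\tilde P_n]|>\epsi E[W_{n+1}]\big)=O\!\left(\frac 1{(\log n)^2}\right),
\]
which tends to $0$ and in particular is of far smaller order than the lower bound we aim to produce. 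For the first event, Theorem \ref{lower_tail_w} applied with $t$ replaced by $(t+\epsi)E[W_{n+1}]/E[\tilde W_n]=t+\epsi+o(1)$ (using $E[\tilde W_n]=E[W_{n+1}]+O(n\log n)$) gives
\[
P\big(|\tilde W_n-E[\tilde W_n]|>(t+\epsi)E[W_{n+1}]\big)\ge\exp\!\left(-4\frac{b}{b-1}\cdot\frac 1b(t+\epsi)\log n\big(\log^{(2)}n+O(\log^{(3)}n)\big)\right).
\]

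Combining the two estimates via $P(A\cap B)\ge P(A)-P(B^c)$ and using that the Chebycheff bound is summable and thus negligible against the stretched exponential, I would conclude
\[
P\big(|W_{n+1}-E[W_{n+1}]|>tE[W_{n+1}]\big)\ge\exp\!\left(-4\frac 1{b-1}(t+\epsi)\log n\big(\log^{(2)}n+O(\log^{(3)}n)\big)\right),
\]
and then let $\epsi\downarrow 0$ (or absorb it into the $O(\log^{(3)}n)$ factor), yielding the claim. The main obstacle is really only the bookkeeping: one must verify that the $\epsi$-loss in the exponent and the Chebycheff error are both of strictly smaller order than the dominant $t\log n\cdot\log^{(2)}n$ term, which is immediate from $\Var(\tilde P_n)=\Theta(n^2)$ versus $E[\tilde W_n]=\Theta(n^2\log n)$.
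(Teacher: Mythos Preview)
Your overall strategy---transfer via the distributional identity $W_{n+1}\eqd\tilde W_n-\tilde P_n+n^2$ and then peel off the $\tilde P_n$ fluctuation---is exactly the route the paper intends (it gives no detailed proof, only the sentence ``With the transfer results already used in section~\ref{sec-3.2}''). However, there is a genuine gap in your execution: the Chebycheff bound is far too weak for a \emph{lower} tail transfer. You write $P(A\cap B)\ge P(A)-P(B^c)$ and then claim the Chebycheff term $P(B^c)$ is ``negligible against the stretched exponential''. But the lower bound from Theorem~\ref{lower_tail_w} is of order $\exp\bigl(-\Theta(\log n\cdot\log^{(2)}n)\bigr)=n^{-\Theta(\log^{(2)}n)}$, which decays \emph{faster} than any inverse polynomial in $n$, whereas your Chebycheff estimate (which, incidentally, is actually $O(1/(n^2(\log n)^2))$, not $O(1/(\log n)^2)$ as you wrote) decays only polynomially. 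Hence $P(B^c)$ is \emph{much larger} than the only lower bound you have on $P(A)$, so $P(A)-P(B^c)$ can be negative and you cannot conclude anything. The same issue blocks the ``let $\epsi\downarrow 0$'' manoeuvre: the $\epsi$-loss lives at order $\log n\cdot\log^{(2)}n$ in the exponent and cannot be absorbed into an $O(\log^{(3)}n)$ correction for any fixed $\epsi>0$.

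The fix is to replace Chebycheff by the exponential tail bound for $\tilde P_n$ already available from Theorem~\ref{upper_tail_bound_wp}. With threshold $\epsi E[W_{n+1}]=\Theta(\epsi n^2\log n)$, equivalently $|\tilde P_n-E[\tilde P_n]|/n>\Theta(\epsi n\log n)$, the last regime of \eqref{bounds} gives $P(B^c)\le\exp\bigl(-\Theta(\epsi n(\log n)^2)\bigr)$, which is vastly smaller than $\exp(-\Theta(\log n\cdot\log^{(2)}n))$. Then $P(A)-P(B^c)\ge L_n-\exp(-\Theta(n(\log n)^2))\ge L_n/2$ for large $n$, and the factor $1/2$ is absorbed into the $O(\log^{(3)}n)$. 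Finally, to avoid the $\epsi$-loss in the leading exponent you should let $\epsi=\epsi_n\to 0$ slowly enough (e.g.\ $\epsi_n=\log^{(3)}n/\log^{(2)}n$), so that the extra $\epsi_n\log n\cdot\log^{(2)}n$ in the exponent is genuinely $O(\log n\cdot\log^{(3)}n)$ while $P(B^c)$ remains negligible.
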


Remark \ref{bem_port} holds true also for the lower tail bound.
\begin{bem}
The constants $D$ arising in the results depend on the specific toll function which in turn depends on the functional and the tree model considered.
Since the toll function in \eqref{toll-term-a} and \eqref{toll-term-b} is only known up to a $o(1)$-term, it is in general not possible to determine this constant.
Nevertheless, it is an analytical problem and should be solvable for special functionals and tree models.
For instance, in the case of the vector $(W_n,P_n)$ of the binary search tree, \citet{alikhan_neininger_07} showed $D\le 1$.
\end{bem}
\bibliographystyle{plainnat}
\bibliography{../bib,../../my-bib}

\begin{thebibliography}{16}
\providecommand{\natexlab}[1]{#1}
\providecommand{\url}[1]{\texttt{#1}}
\expandafter\ifx\csname urlstyle\endcsname\relax
  \providecommand{\doi}[1]{doi: #1}\else
  \providecommand{\doi}{doi: \begingroup \urlstyle{rm}\Url}\fi

\bibitem[Ali~Khan and Neininger(2004)]{alikhan_neininger_04}
T.~Ali~Khan and R.~Neininger.
\newblock Probabilistic analysis for randomized game tree evaluation.
\newblock In \emph{Mathematics and computer science. {III}}, Trends Math.,
  pages 163--174. Birkh\"auser, Basel, 2004.

\bibitem[Ali~Khan and Neininger(2007)]{alikhan_neininger_07}
T.~Ali~Khan and R.~Neininger.
\newblock Tail bounds for the {W}iener index of random trees.
\newblock In \emph{2007 {C}onference on {A}nalysis of {A}lgorithms, {A}of{A}
  07}, Discrete Math. Theor. Comput. Sci. Proc., AH, pages 279--289. Assoc.
  Discrete Math. Theor. Comput. Sci., Nancy, 2007.

\bibitem[Bergeron et~al.(1992)Bergeron, Flajolet, and
  Salvy]{bergeron_flajolet_salvy_07}
F.~Bergeron, P.~Flajolet, and B.~Salvy.
\newblock Varieties of increasing trees.
\newblock In \emph{C{AAP} '92 ({R}ennes, 1992)}, volume 581 of \emph{Lecture
  Notes in Comput. Sci.}, pages 24--48. Springer, Berlin, 1992.

\bibitem[Broutin and Devroye(2006)]{broutin_devroye_06}
N.~Broutin and L.~Devroye.
\newblock Large deviations for the weighted height of an extended class of
  trees.
\newblock \emph{Algorithmica}, 46\penalty0 (3--4):\penalty0 271--297, 2006.

\bibitem[Broutin et~al.(2008)Broutin, Devroye, McLeish, and de~la
  Salle]{broutin_devroye_mcleish_delasalle_08}
N.~Broutin, L.~Devroye, E.~McLeish, and M.~de~la Salle.
\newblock The height of increasing trees.
\newblock \emph{Random Structures Algorithms}, 32\penalty0 (4):\penalty0
  494--518, 2008.

\bibitem[Chassaing and Janson(2004)]{chassaing_janson_04}
P.~Chassaing and S.~Janson.
\newblock The center of mass of the {ISE} and the {W}iener index of trees.
\newblock \emph{Electron. Comm. Probab.}, 9:\penalty0 178--187 (electronic),
  2004.

\bibitem[Fill and Janson(2001)]{fill_janson_01}
J.~A. Fill and S.~Janson.
\newblock Approximating the limiting {Q}uicksort distribution.
\newblock \emph{Random Structures Algorithms}, 19\penalty0 (3--4):\penalty0
  376--406, 2001.

\bibitem[Fill and Janson(2009)]{fill_janson_09}
J.~A. Fill and S.~Janson.
\newblock Precise logarithmic asymptotics for the right tails of some limit
  random variables for random trees.
\newblock \emph{Ann. Comb.}, 12\penalty0 (4):\penalty0 403--416, 2009.

\bibitem[Knessl and Szpankowski(1999)]{knessl_szpankowski_99}
C.~Knessl and W.~Szpankowski.
\newblock Quicksort algorithm again revisited.
\newblock \emph{Discrete Math. Theor. Comput. Sci.}, 3\penalty0 (2):\penalty0
  43--64 (electronic), 1999.

\bibitem[Lindvall(1992)]{lindvall_92}
T.~Lindvall.
\newblock \emph{Lectures on the Coupling Method}.
\newblock Wiley Series in Probability and Mathematical Statistics: Probability
  and Mathematical Statistics. John Wiley \& Sons Inc., New York, 1992.
\newblock A Wiley-Interscience Publication.

\bibitem[McDiarmid and Hayward(1996)]{mcdiarmid_hayward_96}
C.~J.~H. McDiarmid and R.~B. Hayward.
\newblock Large deviations for {Q}uicksort.
\newblock \emph{J. Algorithms}, 21\penalty0 (3):\penalty0 476--507, 1996.

\bibitem[Munsonius(2010{\natexlab{a}})]{munsonius_10}
G.~O. Munsonius.
\newblock \emph{Limit Theorems for Functionals of Recursive Trees}.
\newblock PhD thesis, University of Freiburg, Germany, 2010{\natexlab{a}}.\\
\newblock \url{http://www.freidok.uni-freiburg.de/volltexte/7472/}

\bibitem[Munsonius(2010{\natexlab{b}})]{munsonius_10b}
G.~O. Munsonius.
\newblock The total {S}teiner {$k$}-distance for {$b$}-ary recursive trees and
  linear recursive trees.
\newblock In \emph{2010 {C}onference on {A}nalysis of {A}lgorithms, {A}of{A}
  10}, Discrete Math. Theor. Comput. Sci. Proc., AH, pages 529--550. Assoc.
  Discrete Math. Theor. Comput. Sci., Nancy, 2010{\natexlab{b}}.

\bibitem[Pittel(1994)]{pittel_94}
B.~Pittel.
\newblock Note on the heights of random recursive trees and random {$m$}-ary
  search trees.
\newblock \emph{Random Structures Algorithms}, 5\penalty0 (2):\penalty0
  337--347, 1994.

\bibitem[R{\"o}sler(1992)]{roesler_92}
U.~R{\"o}sler.
\newblock A fixed point theorem for distributions.
\newblock \emph{Stochastic Process. Appl.}, 42\penalty0 (2):\penalty0 195--214,
  1992.

\bibitem[R{\"u}schendorf and Schopp(2007)]{rueschendorf_schopp_07a}
L.~R{\"u}schendorf and E.-M. Schopp.
\newblock Exponential bounds and tails for additive random recursive sequences.
\newblock \emph{Discrete Math. Theor. Comput. Sci.}, 9\penalty0 (1):\penalty0
  333--352, 2007.

\end{thebibliography}
\end{document}